\documentclass[11pt]{amsart}
\usepackage{amsmath,amsthm,amssymb,url}
\frenchspacing
\addtolength{\textwidth}{2cm}
\addtolength{\hoffset}{-1cm}
\addtolength{\textheight}{2cm}
\addtolength{\voffset}{-1cm}

\usepackage[all]{xy}

\newtheorem*{theorem*}{Theorem}
\newtheorem*{proposition*}{Proposition}
\newtheorem*{corollary*}{Corollary}

\newtheorem{theorem}{Theorem}[section]

\newtheorem{proposition}[theorem]{Proposition}

\theoremstyle{definition}

\theoremstyle{remark}
\newtheorem{remark}[theorem]{Remark}

\renewcommand{\bar}{\overline}

\newcommand{\g}{\mathfrak{g}}
\newcommand{\h}{\mathfrak{h}}

\newcommand{\K}{\mathbb{K}}

\newcommand{\Id}{\operatorname{Id}}

\newcommand{\MC}{\operatorname{MC}}
\newcommand{\Def}{\operatorname{Def}}
\newcommand{\Del}{\operatorname{Del}}
\newcommand{\Hom}{\operatorname{Hom}}

\newcommand{\holim}{\operatorname{holim}}

\newcommand{\Tot}{\operatorname{Tot}}

\newcommand{\Eps}{\mathcal{E}}
\newcommand{\U}{\mathcal{U}}
\newcommand{\C}{\mathbb{C}}

\begin{document}

\title[Semicosimplicial DGLAs in deformation theory]{
Semicosimplicial DGLAs in deformation theory}

\begin{abstract}We identify \v{C}ech cocycles in nonabelian (formal) group cohomology with
Maurer-Cartan elements in a suitable $L_\infty$-algebra. Applications to
deformation theory are described.

\end{abstract}

\subjclass[2010]{18G30, 18G50, 18G55, 13D10, 17B70}
\keywords{Differential graded Lie algebras, $L_\infty$-algebras, functors of Artin rings}

\date{}
\author{Domenico Fiorenza}
\address{\newline Dipartimento di Matematica \lq\lq Guido
Castelnuovo\rq\rq,\hfill\newline Sapienza Universit\`a di Roma,
\hfill\newline P.le Aldo Moro 5, I-00185 Roma Italy.}
\email{fiorenza@mat.uniroma1.it}
\urladdr{www.mat.uniroma1.it/people/fiorenza/}

\author{Marco Manetti}
\address{\newline Dipartimento di Matematica \lq\lq Guido
Castelnuovo\rq\rq,\hfill\newline Sapienza Universit\`a di Roma,
\hfill\newline P.le Aldo Moro 5, I-00185 Roma Italy.}
\email{manetti@mat.uniroma1.it}
\urladdr{www.mat.uniroma1.it/people/manetti/}

\author{Elena Martinengo}
\address{\newline Dipartimento di Matematica \lq\lq Guido
Castelnuovo\rq\rq,\hfill\newline Sapienza Universit\`a di Roma,
\hfill\newline P.le Aldo Moro 5, I-00185 Roma Italy.}
\email{martinengo@mat.uniroma1.it}
\urladdr{www.mat.uniroma1.it}

\maketitle
\section*{Introduction}

Let $X$ be a smooth algebraic variety over an algebraically closed field $\mathbb{K}$ of characteristic 0 and let $\mathcal{U}=\{U_i\}$ be an affine open covering of $X$.  The infinitesimal deformations of $X$ are controlled by the alternating \v{C}ech complex of the tangent sheaf
\[ C^*(\mathcal{U}, \mathcal{T}_X):\quad 0\to \prod_i \Gamma(U_i, \mathcal{T}_X)\to \prod_{i<j} \Gamma(U_{ij}, \mathcal{T}_X)\to\cdots\]
in at least two canonical ways.
The first is familiar in algebraic geometry  and dates back to Kodaira and Spencer: since every infinitesimal deformation of $U_i$ is trivial \cite{Sernesi}, if $(A,\mathfrak{m}_A)$ is a local Artin algebra with residue field $\mathbb{K}$,
every  deformation of $X$ over $Spec(A)$ is obtained by patching the schemes $U_i\times Spec(A)$ along the open subsets $U_{ij}\times Spec(A)$ and therefore by a sequence $\theta_{ij}$ of automorphisms of the trivial deformations $U_{ij}\times Spec(A)$  satisfying the cocycle condition on triple intersections.
Since the base field has characteristic 0, for every open subset $U\subset X$, the group of automorphisms of the trivial deformation $U\times Spec(A)$ is isomorphic to $\exp(\Gamma(U, \mathcal{T}_X)\otimes\mathfrak{m}_A)$ and the deformation equation of $X$ is $e^{d_{ij}}e^{-d_{kj}}e^{d_{ki}}=1$ for $k<i<j$, or equivalently
\begin{equation}\label{equ.deformationBCH} 
d_{ij}\bullet (-d_{kj})\bullet d_{ki}=0,\qquad k<i<j,\qquad
d_{ab}\in \Gamma(U_{ab},T_X)\otimes\mathfrak{m}_A,
\end{equation}
where $\bullet$ is the Baker-Campbell-Hausdorff product and $\mathfrak{m}_A$ is the maximal ideal of $A$. The isomorphism classes of deformations of $X$ over $Spec(A)$ are therefore 
classified by the first cohomology set $H^1(\mathcal{U}, \exp(\mathcal{T}_X\otimes\mathfrak{m}_A))$.

The second way $C^*(\mathcal{U}, \mathcal{T}_X)$ is related to deformations of $X$ is the subject of this paper. According the differential graded Lie algebras approach to deformation theory in characteristic zero, deformations of $X$ are governed by a suitable differential graded Lie algebra, or more in general, by an $L_\infty$-algebra. And indeed, looking at the problem from the homotopical point of view, following \cite{bousfield-kan} and \cite{hinich} the  isomorphism classes of infinitesimal deformation of $X$ are governed by the homotopy limit of the cosimplicial Lie algebra of \v{C}ech cochains of vector fields over  the covering $\mathcal{U}$. Then, using standard homology perturbation theory we get a canonical sequence of higher brackets $\{\mu_n\}$ on the complex $C^*(\mathcal{U}, \mathcal{T}_X)$ inducing an
$L_{\infty}$ structure governing infinitesimal deformations of $X$ \cite{chenggetzler,Fio-Man}. 
The way this $L_\infty$ structure governs deformations of $X$ turns out to be the most natural possible: we show that the solutions of the deformation equation (\ref{equ.deformationBCH}) coincide, as a subset of $C^1(\mathcal{U},\mathcal{T}_X)\otimes\mathfrak{m}_A=\prod_{i<j}\Gamma(U_{ij},T_X)\otimes\mathfrak{m}_A$, with the solutions of the  Maurer-Cartan equation 
\begin{equation}\label{equ.deformationTOT} 
\sum_{n=1}^{\infty} \frac{\mu_n(x^{\otimes n})}{n!}=0,\qquad x\in C^1(\mathcal{U},\mathcal{T}_X)\otimes\mathfrak{m}_A
\end{equation}
for the $L_\infty$-algebra $(C^*(\mathcal{U}, \mathcal{T}_X),\mu_1,\mu_2,\dots)$, and that  two solutions of the deformation equation give isomorphic deformations if and only if they are homotopy equivalent Maurer-Cartan elements. In other words, we refine Hinich equivalence between the  groupoid $\Def_X$ of deformations of $X$ and the Deligne groupoid of $\holim\,\Gamma(\mathcal{U},\mathcal{T_X})$ to a natural isomorphism of groupoids between $\Def_X$ and the Poincar\'e groupoid of $C^*(\mathcal{U}, \mathcal{T}_X)$.

The above constructions extend naturally to any semicosimplicial Lie algebra 
 \[
\xymatrix{ {{\mathfrak g}_0}
\ar@<2pt>[r]\ar@<-2pt>[r] & { {\mathfrak g}_1}
      \ar@<4pt>[r] \ar[r] \ar@<-4pt>[r] & { {\mathfrak g}_2}
\ar@<6pt>[r] \ar@<2pt>[r] \ar@<-2pt>[r] \ar@<-6pt>[r]&
\cdots}
\]
over a field of characteristic 0. As an immediate geometric application, one obtains canonical $L_\infty$-algebras governing classical deformation problems such as infinitesimal deformations of a locally free sheaf $\Eps$ of $\mathcal{O}_X$-modules and infinitesimal deformations of the pair $(X, \Eps)$.
\bigskip

The paper is organized as follows: in the first three sections we fix notations, review results by Hinich \cite{hinich,hinich-algebras} and Getzler \cite{getzler}, and recall the Thom-Whitney model for the homotopy limit of a semicosimplicial DGLA; in Section 4 we prove our main result, postponing the details of the construction of the $L_\infty$-algebra to the Appendix; finally, in Section 5 geometric applications are described.
\bigskip

We thank the Referee for useful comments and criticism 
which helped us a lot in improving this paper. In particular the Referee 
should be credited for the proof of the Hinich-Getzler  homotopy equivalence 
theorem we shortly describe in Section \ref{Section.Hinich-Getzler}; namely, this proof had been sketched in 
such a detail in one of the Referee's letters that our only contribution 
to it may be considered the eventual introduction of errors and 
imprecisions. We thank Donatella Iacono for several interesting conversations on the subject of this paper.
\bigskip

Throughout the paper, $\K$ is a fixed field of characteristic zero. 

\section{Deligne groupoids and descent data}\label{Section.Deligne}
For any DGLA $\g$, the Deligne groupoid of $\mathfrak{g}$ is the formal groupoid 
\[ {\rm Del}(\g): {\bf Art}_{\K} \to {\bf Grpd}, \] 
which associates to every local artinian $\K$-algebra $A$ with maximal ideal $\mathfrak m_A$ and residue field $\K$ the action groupoid 
\[{\rm Del}(\g)(A)=\MC(\g\otimes \mathfrak m_A)//\exp(\g^0\otimes \mathfrak m_A),
\]
where  $\MC(\g\otimes \mathfrak m_A)$ is the set of Maurer-Cartan elements of the DGLA $\g\otimes \mathfrak m_A$, i.e., the set of elements $x\in   \g^1\otimes \mathfrak m_A$ which satisfy the Maurer-Cartan equation $dx +\frac{1}{2}[x,x]=0$, and the group $\exp(\g^0\otimes \mathfrak m_A)$ acts on Maurer-Cartan elements via gauge transformations $e^a\colon x\mapsto e^a*x$. In the above formulas, $\mathfrak{g}^i$ denotes the subspace of degree $i$ elements of the DGLA $\g$.
More explicitly, ${\rm Del}(\g)(A)$ is the small groupoid whose set of objects is the set $\rm{Del}(\g)(A)$ and with 
\[
\Hom_{{\rm Del}(\g)(A)}(x,x') = \{ a \in \g^0\otimes \mathfrak m_A \mid e^a*x =x'   \}. 
\]
\par
Let now  ${\mathfrak g}^\Delta$ be a \emph{semicosimplicial differential graded Lie algebra}, i.e.,  a
covariant functor $\mathbf{\Delta_{mon}}\to
\mathbf{DGLAs}$, from the category
$\mathbf{\Delta_{mon}}$, whose objects are finite
ordinal sets and whose morphisms are order-preserving injective
maps between them, to the category of DGLAs. In other words, ${\mathfrak g}^\Delta$ is a diagram
 \[
\xymatrix{ {{\mathfrak g}_0}
\ar@<2pt>[r]\ar@<-2pt>[r] & { {\mathfrak g}_1}
      \ar@<4pt>[r] \ar[r] \ar@<-4pt>[r] & { {\mathfrak g}_2}
\ar@<6pt>[r] \ar@<2pt>[r] \ar@<-2pt>[r] \ar@<-6pt>[r]&
\cdots}
\]
where each ${\mathfrak g}_i$ is a DGLA, and for each
$i>0$ there are $i+1$ morphisms of DGLAs
\[
\partial_{k,i}\colon {\mathfrak g}_{i-1}\to {\mathfrak
g}_{i},
\qquad k=0,\dots,i,
\]
such that
$\partial_{k+1,i+1}\partial_{l,i}=\partial_{l,i+1}\partial_{k,i}$,
for any
$k\geq l$.
\par
Applying the Deligne functor to ${\mathfrak g}^\Delta$, we obtain a semicosimplicial formal groupoid ${\rm Del}({\mathfrak g}^\Delta)$. The \emph{groupoid of descent data} for ${\rm Del}({\mathfrak g}^\Delta)$, i.e., the homotopy limit of the diagram ${\rm Del}({\mathfrak g}^\Delta)$, is easily desribed: it is the formal groupoid whose objects are pairs $(l,m)$ with $l$ an object in ${\rm Del}({\mathfrak g}_0)$ and $m$ a morphism in ${\rm Del}({\mathfrak g}_1)$ between $
\partial_{0,1}l$ and $\partial_{1,1}l$, such that the three images of $m$ via the maps $\partial_{i,2}$ are the edges of a 2-simplex in the nerve of ${\rm Del}({\mathfrak g}_2)$; morphisms between $(l_0,m_0)$ and $(l_1,m_1)$ are morphisms $a$ in ${\rm Del}({\mathfrak g}^0)$ between $l_0$ and $l_1$ making the diagram
\[
\xymatrix{
\partial_{0,1}l_0\ar[r]^{e^{m_0}}\ar[d]_{e^{\partial_{0,1}a}}&\partial_{1,1}l_0\ar[d]^{e^{\partial_{1,1}a}}\\
\partial_{0,1}l_1\ar[r]^{e^{m_1}}&\partial_{1,1}l_1
}
\]
 in ${\rm Del}({\mathfrak g}^1)$ commute. In a more compact and explicit (but less enlighting) form, for any local Artin algebra $A$, $\holim \Del(\g^\Delta)(A)$ is the small groupoid whose set of objects is the set
\[ \left\{ (l,m)\in
(\g_0^1 \oplus \g^0_1) \otimes \mathfrak m_A \left|
\begin{array}{l}
dl+\frac{1}{2}[l,l]=0,\\ e^{m}*\partial_{0,1}l=\partial_{1,1}l, \\
e^{\partial_{0,2}m} e^{-\partial_{1,2}m}e^{\partial_{2,2}m} =1
\end{array} \right.\right\}\]
and whose set of morphisms between two objects $(l_0,m_0)$ and $(l_1,m_1)$ is the set
\[ \left \{a \in \g^0_0\otimes \mathfrak m_A \left| \begin{array}{l}
e^a * l_0=l_1\\
e^{- m_0}e^{-\partial_{1,1}a}e^{m_1}e^{\partial_{0,1}a}=1 \end{array}\right. \right\}. \]
Note that in the case $\g^\Delta$ is a semicosimplicial Lie algebra, i.e., all the DGLAs $\mathfrak{g}_i$ are concentrated in degree zero, the set of objects of the groupoid of descent data for $\Del(\g^\Delta)$ is the set of nonabelian 1-cocycles
 
\[ Z^1(\exp(\g_1))(A)= \{ m\in \g_1 \otimes \mathfrak m_A \mid
e^{\partial_{0,2}(m)} e^{-\partial_{1,2}(m)}
e^{\partial_{2,2}(m)}=1 \},  \]
while the set of its morphisms between the two objects $m_0$ and $m_1$ is the set
of 1-bordisms between $m_0$ and $m_1$:
\[ \{ a\in
\g_0\otimes \mathfrak m_A \mid e^{-\partial_{1,1}(a)} e^{m_1} e^{\partial_{0,1}(a)}=e^{m_0} \}.\]
Therefore, the groupoid of descent data is a groupoid refinement of the nonabelian group cohomology set $H^1(\exp({\g_1 \otimes \mathfrak m_A}))$.

The name `groupoid of descent data' has an evident geometric origin. Consider, for instance, the classical problem of deformations of a smooth complex variety $X$.  
The groupoid-valued functor of infinitesimal deformations of $X$ is the functor
\[ \Def_X: {\bf Top}(X)\times {\bf Art}_\C \to {\bf Grpd} \]
which maps an open subset $U$ of $X$ and a local Artin $\C$-algebra $A$ to the groupoid whose objects are deformations of $U$ over the pointed formal scheme ${\rm Spec}(A)$, and whose morphisms are isomorphisms between deformations. It is well known that $\Def_X$ is a formal stack over the site ${\bf Top}(X)$. In particular, the formal groupoid of global deformations of $X$ 
is the groupoid of descent data for local deformations:
\[ \Def_X(X) \simeq \displaystyle \holim_{U\in \Delta_\U} \Def_{X}(U), \]
where $\Delta_\U$ is the semisimplicial object in ${\bf{Top}}_X$
associated with an open cover $\U$ of $X$. If $U$ is an affine (or Stein in the complex analytic case) open subset of $X$ and $A$ is a local Artin $\mathbb{C}$-algebra, then the groupoid $\Def_X(U;A)$ is equivalent to the action groupoid $*//\exp(\mathcal{T}_X(U)\otimes \mathfrak{m}_A)$, where $\mathcal{T}_X$ is the tangent sheaf of $X$. That is, 
\[
\Def_X(U)\simeq {\rm Del}(\mathcal{T}_X(U)).
\] 
This equivalence is compatible with restriction maps; therefore if we consider the 
\v{C}ech
semicosimplicial Lie algebra $ \mathcal{T}_X(\mathcal{U})$ associated with the tangent sheaf and an affine open cover $\U$ of $X$, i.e., 
\[
\xymatrix{ {\prod_i\mathcal{T}_X(U_i)}
\ar@<2pt>[r]\ar@<-2pt>[r] & {
\prod_{i<j}\mathcal{T}_X(U_{ij})}
      \ar@<4pt>[r] \ar[r] \ar@<-4pt>[r] &
      {\prod_{i<j<k}\mathcal{T}_X(U_{ijk})}
\ar@<6pt>[r] \ar@<2pt>[r] \ar@<-2pt>[r] \ar@<-6pt>[r]& \cdots},
\]
we obtain a commutative diagram of formal groupoids
\[
\xymatrix{ {\rm Del}({\prod_i\mathcal{T}_X(U_i))}\ar[d]^{\wr}
\ar@<2pt>[r]\ar@<-2pt>[r] & {
\prod_{i<j}{\rm Del}(\mathcal{T}_X(U_{ij}))}\ar[d]^{\wr}
      \ar@<4pt>[r] \ar[r] \ar@<-4pt>[r] &
      {\prod_{i<j<k}{\rm Del}(\mathcal{T}_X(U_{ijk}))}\ar[d]^{\wr}
\ar@<6pt>[r] \ar@<2pt>[r] \ar@<-2pt>[r] \ar@<-6pt>[r]& \cdots
\\
 {\prod_i\Def_X(U_i)}
\ar@<2pt>[r]\ar@<-2pt>[r] & {
\prod_{i<j}\Def_X(U_{ij})}
      \ar@<4pt>[r] \ar[r] \ar@<-4pt>[r] &
      {\prod_{i<j<k}\Def_X(U_{ijk})}
\ar@<6pt>[r] \ar@<2pt>[r] \ar@<-2pt>[r] \ar@<-6pt>[r]& \cdots
},
\]
where all the vertical arrows are equivalences. It follows that the induced map between homotopy limits is an equivalence, and so $\holim_{U\in \Delta_\U} \Del(\mathcal{T}_X(U))$ is equivalent to the groupoid of descent data for infinitesimal deformations of $X$ (and so it is equivalent to the groupoid of infinitesimal deformations of $X$). In particular, one recovers the classical description of the set of isomorphism classes of infinitesimal deformations of $X$ over ${\rm Spec}(A)$ as 
\[
H^1(X;\exp(\mathcal{T}_X\otimes\mathfrak{m}_A)).
\]
Closely related examples are infinitesimal deformations of a locally free sheaf $\Eps$ of $\mathcal{O}_X$-modules and infinitesimal deformations of the pair $(X, \Eps)$. The above argument verbatim applies to these cases: the sheaves of Lie algebras involved are the sheaf $\Eps{nd}(\Eps)$ of endomorphisms of $\Eps$ and the sheaf ${\mathcal D}^1(\Eps)$ of first order
differential operators on $\Eps$ with scalar principal symbol, respectively.

\section{Overview of Hinich and Getzler results}\label{Section.Hinich-Getzler}

One obtains a much more versatile theory moving from formal groupoids to formal $\infty$-groupoids, i.e., to functors ${\bf Art}_{\K} \to {\bf Kan\ complexes}$.
There is an obvious functorial way to produce an $\infty$-groupoid out of a groupoid, which is taking its nerve, i.e., the simplicial set whose $k$-simplices are the $k$-tuples of composable morphisms in the groupoid. This way, the Deligne groupoid of a DGLA $\mathfrak{g}$ is promoted to the formal $\infty$-groupoid
\[
N({\rm Del}(\g)): {\bf Art}_{\K} \to {\bf Kan\ complexes}. 
\]  
It is a remarkable insight of Sullivan \cite{sullivan}, then fully investigated by Hinich and Getzler \cite{hinich,getzler} that, when $\mathfrak{g}$ is concentrated in nonnegative degrees,  $N({\rm Del}(\g))$ admits a simple description in terms of differential forms on algebraic simplices.
More precisely, for every $n\ge 0$, denote by $\Omega_n$ the differential graded
commutative algebra of polynomial differential forms on the
standard $n$-simplex $\Delta^n$:
\[ \Omega_n=\frac{\mathbb{K}[t_0,\ldots,t_n,dt_0,\ldots,dt_n]}{(\sum t_i-1,\sum dt_i)},\]
and, or any DGLA $\g$, consider the functor
\[ \MC_{\infty}(\g): {\bf Art}_{\K} \to {\bf SSet},\]
which associates to every local artinian $\K$-algebra $(A,\mathfrak m_A)$ the simplicial set $\MC((\g\otimes\otimes  \Omega_{\bullet})\otimes \mathfrak m_A)$. Then Hinich proves
\begin{proposition}[\cite{hinich}, Proposition 2.2.3] \label{Proposition.MC e Del}
Let $\g$ be a DGLA concentrated in nonegative degree. Then there is a natural homotopy equivalence:
\[ \Phi: \MC_\infty(\g) \stackrel{\simeq} {\longrightarrow}N (\Del(\g)). \]
In particular, $\Phi$ induces an equivalence of formal groupoids $\pi_{\leq 1} \MC_\infty(\g)\simeq 
\Del(\g)$.
\end{proposition}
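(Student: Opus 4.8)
The plan is to construct the map $\Phi$ explicitly using integration of polynomial differential forms, following Sullivan's classical recipe, and then to verify it is a weak equivalence of simplicial sets degreewise in the Artin algebra $A$. First I would recall that a $0$-simplex of $\MC_\infty(\g)(A)$ is precisely a Maurer-Cartan element of $\g\otimes\mathfrak m_A$, hence an object of $\Del(\g)(A)$; this fixes $\Phi$ in degree $0$. A $1$-simplex is a Maurer-Cartan element $\xi(t,dt)\in(\g^1\otimes\Omega_1\oplus\g^0\otimes\Omega_1 dt)\otimes\mathfrak m_A$, i.e. a solution of the Maurer-Cartan equation on the affine line; I would show, using the standard ODE argument (the "gauge along the path" trick, see \cite{getzler} or \cite{hinich}), that such a $\xi$ is equivalent to a pair of MC elements $x_0,x_1$ (its values at the two vertices $t=0,1$) together with the gauge transformation $e^{a}$, where $a=\int_0^1 \xi_{dt}\,dt$ suitably corrected, carrying $x_0$ to $x_1$. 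This identifies $1$-simplices of $\MC_\infty(\g)(A)$ with morphisms in $\Del(\g)(A)$ up to the homotopy relation in degree $2$, which is exactly the data of an edge in $N(\Del(\g))(A)$.

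Next I would address the higher simplices. For $n\ge 2$ the key point is that $\MC_\infty(\g)(A)$ is a Kan complex (this is Getzler's theorem, which may be invoked as it is part of the Hinich--Getzler material recalled here), and that its homotopy groups based at an MC element $x$ are computed by the cohomology of the twisted DGLA $\g^x=(\g,d+[x,-],[-,-])$: one has $\pi_k(\MC_\infty(\g)(A),x)\cong H^{1-k}(\g^x\otimes\mathfrak m_A)$ for $k\ge 1$, and since $\g$ is concentrated in nonnegative degrees this group vanishes for $k\ge 2$. On the other side, the nerve of a groupoid has vanishing homotopy groups above degree $1$ by construction, and $\pi_0,\pi_1$ of $N(\Del(\g))(A)$ are the isomorphism classes and the automorphism groups of the Deligne groupoid. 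So the verification reduces to checking that $\Phi$ induces bijections on $\pi_0$ and on all the $\pi_1$'s, which follows from the degree-$0$ and degree-$1$ analysis of the previous paragraph together with the classical identification $\operatorname{Aut}_{\Del(\g)(A)}(x)\cong H^0(\g^x\otimes\mathfrak m_A)$. Naturality in $A$ is immediate because every construction above is functorial in the coefficient ring.

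The main obstacle is the degree-$1$ computation: proving carefully that the holonomy/path-integral construction sending a $1$-simplex $\xi$ to the gauge transformation between its endpoints is well-defined, is compatible with face and degeneracy maps, and descends correctly to the homotopy relation imposed by $2$-simplices. This is where the combinatorics of $\Omega_\bullet$ and the Baker--Campbell--Hausdorff formula genuinely interact, and it is the step that occupies most of the work in \cite{hinich, getzler}. A cleaner alternative, which I would mention as the route actually taken in the literature, is to avoid writing $\Phi$ by hand and instead realize $N(\Del(\g))$ as $\pi_{\le 1}$ applied to a fibrant replacement; then the comparison map arises formally from the adjunction between truncation and nerve, and one only needs the homotopy-group computation of the last paragraph to conclude it is an equivalence onto the $1$-truncation. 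Either way, the substance is the identification $\pi_k\MC_\infty(\g)(A)\cong H^{1-k}(\g^x\otimes\mathfrak m_A)$, which is the technical heart and which I would cite from \cite{getzler} rather than reprove.
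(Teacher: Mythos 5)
The paper does not actually prove this statement: it is imported from Hinich (\cite{hinich}, Proposition 2.2.3), and the text only records the explicit form of $\Phi$ on the $0$- and $1$-skeleta (the unique factorization of a $1$-simplex as $e^{p(t)}*\xi$ with $p(0)=0$ and $\xi\in\MC(\g)(A)$, with $\Phi$ sending it to $p(1)$), which is all that is used later in Theorem 4.1. Your sketch is consistent with that description and is a faithful outline of the Hinich--Getzler argument: identity on $0$-simplices, gauge-flow/holonomy on $1$-simplices, then reduction to $\pi_0$ and $\pi_1$ via Getzler's computation $\pi_k(\MC_\infty(\g)(A),x)\cong H^{1-k}(\g^x\otimes\mathfrak m_A)$ for $k\geq 1$ and the vanishing of $H^{1-k}$ for $k\geq 2$ when $\g$ lives in nonnegative degrees. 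Two points deserve emphasis. First, your opening paragraph only defines $\Phi$ on the $1$-skeleton, which is not yet a simplicial map; the route you relegate to a ``cleaner alternative'' --- compose the unit $\MC_\infty(\g)(A)\to N(\pi_{\leq 1}\MC_\infty(\g)(A))$ of the adjunction between the fundamental-groupoid functor and the nerve with the isomorphism $\pi_{\leq 1}\MC_\infty(\g)(A)\cong\Del(\g)(A)$ extracted from your degree-$0$/degree-$1$ analysis --- should really be the definition of $\Phi$, after which the homotopy-group computation finishes the proof; otherwise the compatibility with all higher faces and degeneracies remains an unclosed gap. Second, in identifying $\Aut_{\Del(\g)(A)}(x)$ with $H^0(\g^x\otimes\mathfrak m_A)$ you are implicitly using $\g^{-1}=0$, so that the stabilizer $\{a\mid e^a*x=x\}$ equals $Z^0(\g^x\otimes\mathfrak m_A)=H^0(\g^x\otimes\mathfrak m_A)$ and there is no ``irrelevant stabilizer''; this is a second, independent use of the nonnegativity hypothesis and is worth stating. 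With these caveats the plan is correct and matches the argument in the sources the paper cites.
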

The map $\Phi$ has a simple explicit expression on the lower skeleta. Namley both sides have the same 0-skeleton, and $\Phi$ is the identity on this set; on $1$-simplices, $\Phi$ it is given as follows: every element in $\MC(\g \otimes \Omega_1)(A)$ can be uniquely written as $e^{p(t)}*\xi$, where $p(t) \in  (\g^0 \otimes \Omega_1^0)\otimes\mathfrak{m}_A$ satisfies $p(0)=0$ and $\xi \in \MC(\g)(A)$, and $\Phi$ maps $e^{p(t)}*x$ to $p(1)$. 

\bigskip

The construction of $\MC_\infty$ naturally generalizes to $L_\infty$-algebras: one just has to replace the Maurer-Cartan functor of a DGLA with its version for $L_\infty$-algebras; nalmely, with the functor
\[ \MC(\g): {\bf Art}_{\K} \to {\bf Set},\]
mapping $(A,\mathfrak m_A)$, to the set
\[  
\MC(\g)(A)=\left\{ x\in \g^1\otimes \mathfrak m_A \;\strut\left\vert\;
\sum_{n\ge1}\frac{[x,x,\dots,x]_n}{n!}=0\right.\right\},
\]
where 
\[
[\,,\dots,\,]_n\colon \wedge^n\mathfrak{g}\to\mathfrak{g}[2-n]
\]
are the brackets of the $L_\infty$-algebra $\mathfrak{g}$. 

Hinich and Getzler prove that the simplicial set valued functor $\MC_\infty(\g)$ takes values in Kan complexes, for any  DGLA  and, more generally, for any $L_\infty$-algebra $\mathfrak{g}$. Moreover they prove that the functor $\MC_\infty$ sends surjective morphisms of DGLAs  to fibrations of Kan complexes and quasi-isomorphisms of DGLAs to equivalences of Kan complexes; both these statements generalize to $L_\infty$-algebras. Since surjective morphisms and quasi-isomorphisms are the fibrations and the weak-equivalences in the standard model structure on the category of DGLAs (see \cite{pridham}), Hinich and Getzler results are a first major step toward the 
formalization of the following folk statement: \emph{the $(\infty,1)$-category of $L_\infty$-algebras is equivalent to the $(\infty,1)$-category of formal $\infty$-groupoids}, which naturally generalizes the well known equivalence between the category of Lie algebras and the category of formal Lie groups (in characteristic zero). Recently, a further major step in this direction has been made by Pridham, who proves in \cite{pridham} that the homotopy categories of DGLAs and of  $L_\infty$-algebras is equivalent to a certain category of $\bf SSet$-valued functors of Artin rings (`geometric' deformation functors, see also \cite{manetti}).

However, in the present paper we are not interested in the above correspondence, but rather in another of Hinich results from \cite{hinich}; namely that the functor
\[
\MC_{\infty}\colon \text{\bf DGLAs}^{\geq 0}\to \text{\bf Formal $\infty$-Grpds},
\]
where $\text{\bf DGLAs}^{\geq 0}$ denotes the full subcategory of {\bf DGLAs} consisting of differential graded Lie algebras concentrated in nonnegative degrees, commutes with homotopy limits. In other words, Hinich proves the following
\begin{proposition}[\cite{hinich}, Theorem 4.1] \label{Th.MC commuta con holim}
Let $\g^{\Delta}$ be a semicosimplicial DGLA such that $\mathfrak{g}_k$ is concentrated in nonnegative degrees for any $k\geq 0$. Then there is a natural equivalence of functors
\[  \MC_{\infty}(\holim \g^{\Delta}) \simeq \holim \MC_{\infty}(\g^{\Delta}).\]
\end{proposition}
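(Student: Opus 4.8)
The plan is to reduce the statement to a combination of the three facts about $\MC_\infty$ that Hinich and Getzler provide: that it sends quasi-isomorphisms to weak equivalences, that it sends surjections to fibrations, and that it converts finite limits of DGLAs into finite limits of simplicial sets (the latter being essentially the exactness of the functor $\g\mapsto \g\otimes\Omega_\bullet$ together with the fact that $\MC$ of a DGLA concentrated in nonnegative degrees is computed levelwise). Granting these, a homotopy limit of a diagram of Kan complexes is modelled, after replacing the diagram by an objectwise-fibrant one in which the structure maps are fibrations, by the ordinary (strict) limit; so the content is to produce, on the DGLA side, a replacement of $\g^\Delta$ by a levelwise quasi-isomorphic semicosimplicial DGLA whose coface maps become, after applying $\MC_\infty$, fibrations, and whose strict totalization computes $\holim\g^\Delta$.

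Concretely, first I would recall the Thom--Whitney / Navarro-Aznar model $\Tot_{TW}(\g^\Delta)$ for $\holim\g^\Delta$ in the category of DGLAs, which is the subcomplex of $\prod_n \g_n\otimes\Omega_n$ of elements compatible with the coface and codegeneracy operators, and note (this is in Section~3 of the paper) that this is a genuine DGLA and, because every $\Omega_n$ is acyclic and the cofaces are injective, that the canonical maps behave well. Second, I would invoke Proposition~\ref{Proposition.MC e Del} in its $L_\infty$/simplicial form: applying $\MC_\infty$ to the cosimplicial diagram $\g^\Delta$ and taking the homotopy limit of the resulting diagram of Kan complexes, one gets the homotopy limit levelwise; the key identification is then $\MC_\infty(\Tot_{TW}\g^\Delta)\simeq \holim_n \MC_\infty(\g_n\otimes\Omega_n)$, which follows because $\MC_\infty(\g\otimes\Omega_n)$ is (up to the standard homotopy equivalence) the cotensor of $\MC_\infty(\g)$ with $\Delta^n$, so that the Thom--Whitney end on the DGLA side is carried by the exact functor $\MC_\infty$ to the corresponding end of simplicial sets, which is exactly the Bousfield--Kan formula for $\holim$.

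The steps in order: (1) state the Thom--Whitney totalization and its universal property as a model for $\holim$ of semicosimplicial DGLAs; (2) observe that $\MC_\infty$ commutes with the relevant finite limits and with tensoring against the fixed acyclic algebras $\Omega_n$; (3) identify $\MC_\infty(\g\otimes\Omega_\bullet)$ with the cotensor $\MC_\infty(\g)^{\Delta^\bullet}$, using that $\Omega_\bullet$ is the simplicial commutative DGA representing the simplices and that $\MC_\infty$ is defined precisely by tensoring with it; (4) conclude that $\MC_\infty(\Tot_{TW}\g^\Delta)$ is the Bousfield--Kan homotopy limit of the cosimplicial Kan complex $\MC_\infty(\g^\Delta)$; (5) check that this Bousfield--Kan model is weakly equivalent to the correct $\holim$, for which one needs the Reedy-fibrancy of the cosimplicial object $\MC_\infty(\g^\Delta)$ after a levelwise fibrant replacement --- here is where the Getzler/Hinich statement that $\MC_\infty$ sends surjections to fibrations enters, applied to a surjective resolution of $\g^\Delta$ obtained e.g. by the standard simplicial-resolution trick, and where one uses that $\MC_\infty$ preserves the levelwise quasi-isomorphisms so the replacement does not change the homotopy type.

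I expect the main obstacle to be step~(5): the passage between the Thom--Whitney (fat totalization) model and a Reedy-fibrant cosimplicial replacement whose strict limit computes $\holim$. One must verify that the matching maps of $\MC_\infty(\g^\Delta)$ --- or of a levelwise-equivalent surjective replacement --- are fibrations of Kan complexes, which is not immediate from the three black-box properties and needs the genuine content of Hinich's Theorem~4.1 (or an explicit computation with the $\Omega_n$ and the partial matching objects, showing the restriction maps $\g_n\otimes\Omega_n\to(\text{matching object})$ are surjective so that $\MC_\infty$ turns them into fibrations). All the rest is formal manipulation of ends, cotensors, and the Bousfield--Kan formula, once the cotensor identification $\MC_\infty(\g\otimes\Omega_n)\simeq\MC_\infty(\g)^{\Delta^n}$ is in place; I would present that identification carefully since it is the bridge between ``tensor with $\Omega_\bullet$'' on the algebra side and ``cotensor with $\Delta^\bullet$'' on the space side.
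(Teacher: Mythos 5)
Your proposal takes a genuinely different route from the paper. The paper's proof (credited to the referee, following Getzler \cite{getzler}) is an induction on the length of the descending central series of the nilpotent semicosimplicial Lie algebra $\h^\Delta=\g^\Delta\otimes\mathfrak{m}_A$: the abelian base case is the Eilenberg--Zilber--Dold--Kan--Puppe theorem, once one recognizes $\h\mapsto\MC(\h\otimes\Omega_\bullet)$ as the Dold--Kan correspondence for abelian $\h$; the inductive step applies the long exact sequence of homotopy groups to the map of fibration sequences induced by $F^2\h^\Delta\to\h^\Delta\to\h^\Delta/F^2\h^\Delta$, using only that $\MC_\infty$ and $\holim$ preserve fibration sequences. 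This bypasses the Thom--Whitney end, cotensors, the Bousfield--Kan formula, and all fibrancy bookkeeping. Your route would instead give a concrete identification of $\MC_\infty(\Tot_{TW}(\g^\Delta))$ with a totalization of the semicosimplicial Kan complex $\MC_\infty(\g^\Delta)$, which is closer in spirit to what Section 4 of the paper actually uses; the price is that the comparison of the two totalizations carries all of the difficulty.

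That difficulty is not confined to your step (5); it already sits inside step (3). The identification $\MC_\infty(\g\otimes\Omega_n)\simeq\MC_\infty(\g)^{\Delta^n}$ is only a natural weak equivalence (induced by the K\"unneth quasi-isomorphism relating $\Omega_n\otimes\Omega_k$ to polynomial forms on $\Delta^n\times\Delta^k$, which is not an isomorphism), and a levelwise weak equivalence of $\mathbf{\Delta_{mon}}$-diagrams does not induce an equivalence of \emph{strict} ends without a fibrancy hypothesis on both sides --- and fibrancy of the $\Omega$-side end is essentially the statement being proved, so there is a circularity to break. The right tool is not a ``surjective resolution of $\g^\Delta$'' but the Dupont--Sullivan extension lemma \cite{dupont1}: restriction of polynomial forms from $\Delta^n$ to a union of its faces is surjective. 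This makes the tower of partial Thom--Whitney totalizations $\Tot_{TW}^{\leq n}(\g^\Delta)\to\Tot_{TW}^{\leq n-1}(\g^\Delta)$ levelwise surjective, hence a tower of Kan fibrations after applying $\MC_\infty$ (by the Hinich--Getzler fibration property you quote), so that its inverse limit is a homotopy limit. Note also that since the index category is $\mathbf{\Delta_{mon}}$ there are no codegeneracies, hence no Reedy matching conditions on the simplicial-set side: levelwise Kan suffices for the fat totalization to compute $\holim$, and the fibrant replacement you propose in step (5) is unnecessary. With these corrections your outline can be closed up, but as written the central comparison is asserted rather than proved, whereas the paper's inductive argument is self-contained.
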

Let us sketch a proof  
of this result  by induction on the length of the descending cental series 
of the nilpotent semicosimplicial Lie algebra $\mathfrak{g}^\Delta\otimes\mathfrak{m}_A$, verbatim following the argument in \cite{getzler}.  To avoid cumbersome notations, let us write $\h^{\Delta}$ for $\mathfrak{g}^\Delta\otimes\mathfrak{m}_A$ and denote by $\{F^i\h^\Delta\}_{i\geq 1}$ the descending central series of $\h^\Delta$, i.e., the filtration defined by
 $F^1\h^{\Delta}=\h^\Delta$ and
\[ F^i\h^\Delta= \sum_{j+k=i} [F^{j}\h^\Delta, F^k\h^\Delta], \qquad \text{for $i>1$} \] 
(note that nilpotence degree of the $\h_k$'s is uniformly bounded by the nilpotence degree of $\mathfrak m_A$). Also, let us write 
$\MC_\infty(\mathfrak{h}^\Delta)$ for $\MC_\infty(\mathfrak{g}^\Delta)(A)$.
\par
The base of the induction is $F^2\h^\Delta=0$, i.e. an abelian $\h^{\Delta}$. In this case, the homotopy equivalence $\MC_{\infty}(\holim \h^{\Delta}) \simeq \holim \MC_{\infty}(\h^{\Delta})$ is just the Eilenberg-Zilber-Dold-Kan-Puppe theorem \cite{goerss-jardine}, as one immediately realizes by noticing that an abelian DGLA concentrated in nonnegative degrees is the same thing as a complex of abelian groups concentrated in nonnegative degrees; that the total complex of the bicomplex associated with a semicosimplicial complex $\mathfrak{h}^\Delta$ of abelian groups is a model for $\holim(\mathfrak{h}^\Delta)$ in the model category of complexes; that the diagonal of a bisimplicial set $X_{\bullet,\bullet}$ is naturally homotopy equivalent to the simplicial set $\holim_{n} X_{n,\bullet}$ (see, e.g., Chapter 4 in \cite{goerss-jardine}); and, finally, that looking at a complex of abelian groups $\mathfrak{h}$ as an abelian DGLA, the Dold-Kan correspondence $\mathfrak{h}\mapsto \Hom_{{\bf Ch_+(Ab)}}({\rm Chains}(\Delta_\bullet),\h)$ is nothing but the functor $\mathfrak{h}\mapsto \MC(\mathfrak{h}\otimes\Omega_\bullet)$. Note that the original Dold-Kan correspondence is formulated in terms of cosimplicial abelian groups rather than in terms of semicosimplicial abelian groups as we did here: we implicitly used the fact that every semisimplicial object induces a simplicial object by Kan extension.
\par
Next, if the length of the descending central series of  $\mathfrak{h}^\Delta$ is equal to $i> 1$, consider the following diagram of simplicial sets and morphisms, induced by the universal property of homotopical limits:
\[ \xymatrix{ \MC_\infty(\holim (\ F^{2}\h^\Delta)) \ar[d]^\wr\ar[r]& \MC_\infty(\holim\ (\h^\Delta)) \ar[d]\ar[r] &\MC_\infty(\holim \ (\h^\Delta/F^{2}h^\Delta)) \ar[d]^\wr\\
\holim\ \MC_\infty(F^{2}\h^\Delta) \ar[r]& \holim\ \MC_\infty(\h^\Delta) \ar[r] &\holim\ \MC_\infty(\h^\Delta/F^{2}h^\Delta). }\]
Since the length of descending central series of $F^2\mathfrak{h}^\Delta$ is at most $i-1$ and $\mathfrak{h}^\Delta/F^2\mathfrak{h}^\Delta$ is abelian, the inductive hypothesis applies and so the leftmost and rightmost vertical morphisms are homotopy equivalences. Moreover the horizontal sequences are fibration sequences (homotopy limits preserve fibrations), so by the long  exact sequence of homotopy groups also the central vertical morphism is a homotopy equivalence.  
\begin{remark}
Following Getzler, one could extend the above result to semicosimplicial $L_\infty$-algebras concentrated in nonnegative degree, and so, in particular, to DGLAs whose cohomology is concentrated in nonnegative degree.
\end{remark}

Combining Propositions \ref{Proposition.MC e Del} and \ref{Th.MC commuta con holim}, Hinich obtains that, for semicosimplicial DGLAs concentrated in nonnegative degrees, the Deligne functor commutes with homotopy limits, i.e., there is a natural equivalence of formal groupoids
\[ \Del(\holim \g^\Delta) \simeq \holim \Del(\g^\Delta). \]
The main result of the present paper consists in showing that, when $\mathfrak{g}^\Delta$ is a semicosimplicial Lie algebra, by choosing a suitable model $\widetilde{\Tot}(\mathfrak{g}^\Delta)$ for $\holim \g^\Delta$ in the category of $L_\infty$-algebras one can refine the above equivalence to an \emph{isomorphism} of formal groupoids
\[
\pi_{\leq 1}\MC_\infty(\widetilde{\Tot}(\mathfrak{g}^\Delta))\cong \holim \Del(\g^\Delta).
\] 
In other words, descent data for $\Del(\g^\Delta)(A)$ are identified with Maurer-Cartan elements of the $L_\infty$-algebra $\widetilde{\Tot}(\mathfrak{g}^\Delta\otimes\mathfrak{m}_A)$ and isomorphisms between descent data are identified with (homotopy classes of) homotopy equivalences between Maurer-Cartan elements.

\section{The Thom-Whitney model and homotopy transfer of $L_\infty$-structures}\label{Section.TW}
Our first step towards the construction of the $L_\infty$-algebra $\widetilde{\Tot}(\mathfrak{g}^\Delta\otimes\mathfrak{m}_A)$ consists in considering the Thom-Whitney model for the homotopy limit of $\g^{\Delta}$ in the model category of DGLAs, i.e., the end
\[
\operatorname{Tot}_{TW}(\mathfrak{g}^\Delta)=\int_{n\in\mathbb{N}}\Omega_n\otimes\mathfrak{g}_n.
\]
More explicitly, denote by $\Omega_n^i$ the degree $i$ component of $\Omega_n$ and
by $\delta^{k,n}\colon \Omega_n\to \Omega_{n-1}$, $k=0,\ldots,n$,
the face maps. Then we have  natural morphisms of bigraded DGLAs
\[ \delta^{k,n}\colon \Omega_n\otimes \mathfrak{g}_n\to
\Omega_{n-1}\otimes\mathfrak{g}_n,\qquad
\partial_{k,n}\colon \Omega_{n-1}\otimes \mathfrak{g}_{n-1}\to
\Omega_{n-1}\otimes\mathfrak{g}_{n}\]
for every $0\le k\le n$, and the Thom-Whitney DGLA $\operatorname{Tot}_{TW}(\mathfrak{g}^\Delta)$ is defined as the total complex of the Thom-Whitney bicomplex
\[
C^{i,j}_{TW}(\mathfrak{g}^\Delta)
=\{ (x_n)_{n\in {\mathbb
N}}\in \prod_n \Omega_n^i\otimes {\mathfrak g}_n^j
\mid \delta^{k,n}x_n=
\partial_{k,n}x_{n-1}\quad \forall\; 0\le k\le n\}
\]
(the Lie brackets on the $\g_i$'s induce a Lie bracket on the total complex, making it a DGLA).
\par
Since the model structure on the category $\bf{DGLA}$ is defined as for differential complexes (with  surjective morphisms as fibrations), the complex underlying the DGLA  $\Tot_{TW}(\g^\Delta)$ is a model for $\holim \g^\Delta$ in the model category of differential complexes (where, with a little abuse of notation, we have denoted by the same symbol the semicosimplicial complex underlying $\g^\Delta$). However, in the model category of complexs there is a smaller and more tractable model for the homotopy limit of $\g^\Delta$; namely, its total complex. Let us briefly recall its construction. If $\g^\Delta$ is a semicosimplicial complex,
with morphisms $\partial_{k,i}\colon {\mathfrak g}_{i-1}\to {\mathfrak
g}_{i}$, for $k=0,\dots,i,$ then
the maps
\[
\partial_i=\partial_{0,i}-\partial_{1,i}+\cdots+(-1)^{i}
\partial_{i,i}
\]
endow the vector space $\bigoplus_i{\mathfrak g}_i$ with the
structure of a differential complex. Moreover, each ${\mathfrak g}_i$ is a differential
complex:
$
{\mathfrak g}_i=\bigoplus_j {\mathfrak g}_i^j$, $
d_i\colon {\mathfrak g}_i^j\to{\mathfrak g}_i^{j+1}
$,
and, since the maps $\partial_{k,i}$ are morphisms of complexes,
the space
${\mathfrak g}^\bullet_\bullet=\bigoplus_{i,j}{\mathfrak
g}_i^j$
has a natural bicomplex structure, whose associated total complex $({\rm Tot}({\mathfrak g}^\Delta),d_{\Tot})$ is
\[{\rm Tot}({\mathfrak g}^\Delta)=\bigoplus_{i}{\mathfrak
g}_i[-i],\quad d_{\Tot}=\sum_{i,j}\partial_i+(-1)^jd_j.\] 
When $\g^\Delta$ is a semicosimplicial DGLA, there is unfortunately no hope that ${\rm Tot}({\mathfrak g}^\Delta)$ is a model for $\holim(\g^\Delta)$ in the model category of DGLAs: the complex $({\rm Tot}({\mathfrak g}^\Delta),d_{\Tot})$ carries
no natural DGLA structure. Yet, one can overcome this problem by going in the larger category of $L_\infty$-algebras. Indeed, $\Tot_{TW}(\g^\Delta)$ and ${\rm Tot}({\mathfrak
g}^\Delta)$ are homotopy equivalent as differential complexes, being both models for the $\holim \g^\Delta$ in the model category of complexes, and so, by homotopy transfer of structure, one can induce on $\Tot(\g^\Delta)$ an $L_\infty$-algebra structure making it quasi-isomorphic (as an $L_\infty$-algebra) to the DGLA ${\rm Tot}_{TW}({\mathfrak g}^\Delta)$.  What is relevant is that, using results by Whitney and Dupont, this $L_\infty$-algebra enrichment of the complex ${\rm Tot}({\mathfrak g}^\Delta)$ can be done in a functorial way, that is we have a commutative diagram of functors
\[
\xymatrix{& \mathbf{L_\infty\text{-algebras}}\ar[dd]\\
\mathbf{[\Delta_{mon},DGLAs]}\ar[ru]^{\widetilde{\Tot}}\ar[rd]_{\Tot}&\\
&\mathbf{Complexes}
}
\]
where the vertical arrow is the forgetful functor. More precisely, to any morphism $\mathfrak{g}^\Delta\to\mathfrak{h}^\Delta$ of semicosimplicial DGLAs, it corresponds a \emph{linear} $L_\infty$-morphism $\widetilde{\rm Tot}({\mathfrak g}^\Delta)\to \widetilde{\rm Tot}({\mathfrak h}^\Delta)$; see the Appendix for the detailed construction of $\widetilde{\rm Tot}({\mathfrak g}^\Delta)$.

\section{An isomorphism of groupoids}
Since the $n$-ary brackets of an $L_\infty$ algebra $\mathfrak{l}$ have degree $2-n$, the subspace $\mathfrak{l}^{>0}$ of elements of strictly positive degree is a sub-$L_\infty$ algebra of $\mathfrak{l}$. On the semicosimplicial DGLAs side, the corresponding operation consists in repalcing the DGLA $\g_0$ in $\g^\Delta$ with the zero DGLA, thus giving the semicosimplicial DGLA
\[ \mathfrak{g}^{\Delta_{>0}}: \ \  \xymatrix{ 0
\ar@<2pt>[r]\ar@<-2pt>[r] & \mathfrak{g}_1
      \ar@<4pt>[r] \ar[r] \ar@<-4pt>[r] & \mathfrak{g}_2
\ar@<6pt>[r] \ar@<2pt>[r] \ar@<-2pt>[r] \ar@<-6pt>[r] &
\mathfrak{g}_3 \ar@<8pt>[r] \ar@<4pt>[r] \ar[r]
\ar@<-4pt>[r] \ar@<-8pt>[r] &
\ldots }
\]
There is an obvious semicosimplicial DGLA morphism $\g^{\Delta_{>0}}\to\g^\Delta$ 
which is the identity on $\g_i$ for any $i>0$. By functoriality of the $\widetilde{\Tot}$,
 this morphism induces a linear $L_\infty$-morphism $\widetilde{\rm Tot}({\mathfrak g}^{\Delta_{>0}})\to \widetilde{\rm Tot}({\mathfrak g}^\Delta)$, which is nothing but the inclusion
 \[
 \bigoplus_{i>0}{\mathfrak
g}_i[-i]\hookrightarrow \bigoplus_{i}{\mathfrak
g}_i[-i],
 \]
so that, if $\g^\Delta$ is a semicosimplicial Lie algebra (i.e., all the $\g_k$ are concentrated in degree $0$) we have an identification
\[
\widetilde{\rm Tot}({\mathfrak g}^{\Delta_{>0}})=(\widetilde{\rm Tot}({\mathfrak g}^\Delta))^{>0}.
\]
Having introduced these notations, we can prove the main result of this paper.
\begin{theorem} \label{Th.Iso gruppoidi}
Let $\g^\Delta$ be a semicosimplicial Lie algebra. The there is a natural isomorphism of formal groupoids  
\[ \pi_{\leq 1}\MC_\infty(\widetilde{\Tot}(\g^{{\Delta}})) \cong  \holim \rm{Del} (\g^{\Delta}).
\]
\end{theorem}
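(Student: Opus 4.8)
The plan is to compare the two formal groupoids level-by-level, identifying objects with objects and morphisms with morphisms, and to do so by making the $L_\infty$-structure on $\widetilde{\Tot}(\g^\Delta)$ explicit in low degrees. Since $\g^\Delta$ is a semicosimplicial Lie algebra, $\widetilde{\Tot}(\g^\Delta)=\bigoplus_i \g_i[-i]$ has its degree-$1$ part equal to $\g_0^1\oplus\g_1^0$ (where $\g_i^0=\g_i$ sits in total degree $i$), and its degree-$0$ part equal to $\g_0^0=\g_0$. First I would write down the Maurer-Cartan equation $\sum_{n\ge1}\mu_n(x^{\otimes n})/n!=0$ for an element $x=(l,m)\in(\g_0\oplus\g_1)\otimes\mathfrak m_A$ and check, using the explicit formulas for the transferred brackets recalled in the Appendix (the Whitney–Dupont homotopy between $\Tot_{TW}$ and $\Tot$), that it unwinds exactly into the three conditions $dl+\tfrac12[l,l]=0$, $e^{m}*\partial_{0,1}l=\partial_{1,1}l$, and $e^{\partial_{0,2}m}e^{-\partial_{1,2}m}e^{\partial_{2,2}m}=1$ describing the objects of $\holim\Del(\g^\Delta)(A)$. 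Because each $\g_i$ is concentrated in degree $0$, the first equation is vacuous, so really one is checking that the Maurer–Cartan element is a pair $(l,m)$ with $m$ a descent cocycle and $l$ the extra datum matching $\partial_{0,1}l$ and $\partial_{1,1}l$ via $m$; the higher brackets $\mu_n$ contribute precisely the Baker–Campbell–Hausdorff corrections, which is why the BCH products in the descent equations come out right.

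Next I would do the same for morphisms: an element $a\in\widetilde{\Tot}(\g^\Delta)^0\otimes\mathfrak m_A=\g_0\otimes\mathfrak m_A$ acts on Maurer–Cartan elements by the $L_\infty$ gauge action $a\colon x\mapsto e^a\ast x$, whose explicit form in terms of the $\mu_n$ I would expand out and match against the two conditions $e^a\ast l_0=l_1$ and $e^{-m_0}e^{-\partial_{1,1}a}e^{m_1}e^{\partial_{0,1}a}=1$. Then I would invoke the description of $\pi_{\le1}\MC_\infty$ of an $L_\infty$-algebra: by the remarks following Proposition \ref{Proposition.MC e Del} (and its $L_\infty$-generalization), the objects of $\pi_{\le1}\MC_\infty(\widetilde{\Tot}(\g^\Delta))(A)$ are exactly the Maurer–Cartan elements, and the morphisms $x_0\to x_1$ are the elements $p(1)$ coming from paths $e^{p(t)}\ast\xi\in\MC(\widetilde{\Tot}(\g^\Delta)\otimes\Omega_1)(A)$, i.e. the gauge equivalences modulo homotopy; here I must check that distinct gauges induce distinct morphisms in the descent groupoid, or more precisely that the homotopy relation on gauges corresponds to equality of the induced morphisms $a$ between descent data, which is where the word \emph{isomorphism} (rather than just equivalence) has to be earned. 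Functoriality and naturality in $A$ follow from functoriality of $\widetilde{\Tot}$ and of $\MC_\infty$, which were established in Sections \ref{Section.TW} and \ref{Section.Hinich-Getzler}, so no separate argument is needed there.

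Finally I would package the bijections on objects and morphisms into a strict isomorphism of groupoids, checking compatibility with composition; composition in $\pi_{\le1}\MC_\infty$ is the concatenation of gauge paths, which BCH-multiplies the $a$'s, and this matches composition of morphisms of descent data. The main obstacle I anticipate is the explicit identification of the transferred higher brackets $\mu_n$ on $\Tot(\g^\Delta)$ in the range of total degrees relevant here, i.e. verifying that summing the $\mu_n$-contributions reproduces precisely the BCH series appearing in the cocycle and coboundary equations for descent data; this is the computational heart of the theorem and is exactly the point at which one needs the detailed, functorial construction of $\widetilde{\Tot}$ via the Dupont contraction, deferred to the Appendix. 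A secondary subtlety is the bookkeeping of signs coming from the shifts $\g_i[-i]$ and from the alternating sum $\partial_i=\sum_k(-1)^k\partial_{k,i}$, which must be reconciled with the signs in the alternating \v{C}ech differential and in the $L_\infty$ Koszul conventions; I expect these to be routine but error-prone, and I would handle them once and for all at the start by fixing conventions.
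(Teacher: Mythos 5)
Your plan is a direct computation: expand the transferred Maurer--Cartan equation on $\widetilde{\Tot}(\g^\Delta)$ and verify that it coincides with the nonabelian cocycle condition $e^{\partial_{0,2}m}e^{-\partial_{1,2}m}e^{\partial_{2,2}m}=1$, with the higher brackets supplying the Baker--Campbell--Hausdorff corrections, and then do the analogous matching for morphisms. The step you yourself call ``the computational heart'' is precisely the gap: you never prove (and the paper's Appendix does not prove either) that the tree-sum brackets $\mu_n$ produced by the Dupont contraction assemble into the BCH series of the descent equations. That identity is essentially the content of the theorem, so asserting that the Maurer--Cartan equation ``unwinds exactly'' into the three descent conditions is assuming the conclusion. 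A second, independent gap is on morphisms: for an $L_\infty$-algebra the morphisms of $\pi_{\le 1}\MC_\infty$ are \emph{homotopy classes} of $1$-simplices in $\MC(\mathfrak{l}\otimes\Omega_1)$, and the decomposition $e^{p(t)}*\xi$ you invoke is stated in Proposition \ref{Proposition.MC e Del} only for DGLAs; identifying homotopy classes of such paths bijectively with elements $a\in\g_0\otimes\mathfrak m_A$ satisfying the coboundary relation is exactly the point you flag (``where the word isomorphism has to be earned'') but do not resolve.

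The paper avoids both computations by a reduction. It takes the Hinich results of Section \ref{Section.Hinich-Getzler} as a black box to obtain an \emph{equivalence} $\pi_{\le1}\MC_\infty(\widetilde{\Tot}(\g^\Delta))\to\holim\Del(\g^\Delta)$, and then upgrades it to an isomorphism using that an equivalence of groupoids is an isomorphism as soon as it is bijective on objects. Bijectivity on objects is transported, via the strictly commuting square comparing $\g^{\Delta_{>0}}$ with $\g^\Delta$ (the vertical maps on object sets are equalities, since $\g_0$ contributes nothing in degree $1$), from the case of $\g^{\Delta_{>0}}$, where both groupoids are \emph{discrete}: $\holim\Del(\g^{\Delta_{>0}})$ by inspection of its explicit description in Section \ref{Section.Deligne}, and $\pi_{\le1}\MC_\infty(\widetilde{\Tot}(\g^{\Delta_{>0}}))$ because the $\Omega_1$-degree-one component of the Maurer--Cartan equation forces $\frac{d\,x(t)}{dt}=0$, so every $1$-simplex is a constant path. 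An equivalence of discrete groupoids is an isomorphism, and the theorem follows; no bracket is ever computed and no BCH identity is needed. To salvage your route you would have to actually carry out the tree-sum/BCH comparison (in the spirit of \cite{Fio-Man}) and the path-vs-gauge identification for $L_\infty$-algebras; both are substantial and neither is supplied in your proposal.
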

\begin{proof} Let us choose $\Tot_{TW}$ as a model for the homotopy limit of a semicosimplicial DGLA. Then, by results from Section  \ref{Section.Hinich-Getzler} and by the naturality of the
 $L_\infty$-quasiisomorphism  $E_\infty\colon\widetilde{\Tot}\to \Tot_{TW}$ provided by the homotopy transfer, we have a commutative diagram of formal groupoids
\[
\xymatrix{
\pi_{\leq 1}\MC_\infty(\widetilde{\Tot}(\g^{{\Delta_{>0}}}))\ar[r]^{\sim\phantom{mi}}\ar[d]&\pi_{\leq 1}\MC_\infty({\Tot}_{TW}(\g^{{\Delta_{>0}}}))\ar[r]^{\phantom{mm}\sim}\ar[d]& \holim \rm{Del} (\g^{\Delta_{>0}})\ar[d]\\
\pi_{\leq 1}\MC_\infty(\widetilde{\Tot}(\g^{{\Delta}}))\ar[r]^{\sim\phantom{mi}}&\pi_{\leq 1}\MC_\infty({\Tot}_{TW}(\g^{{\Delta}}))\ar[r]^{\phantom{mm}\sim}& \holim \rm{Del} (\g^{\Delta})\\
}
\]
where the horizontal arrows are equivalences. The commutativity of this diagram is strict, i.e., we have a commutative diagram between the sets of objects:
\[
\xymatrix{
\MC(\widetilde{\Tot}(\g^{{\Delta_{>0}}}))\ar[r]^{E_\infty\phantom{mi}}\ar@{=}[d]&\MC({\Tot}_{TW}(\g^{{\Delta_{>0}}}))\ar[r]^{\phantom{mm}\Phi}\ar[d]& Z^1(\exp(\g_1))\ar@{=}[d]\\
\MC(\widetilde{\Tot}(\g^{{\Delta}}))\ar[r]^{E_\infty\phantom{mi}}&\MC({\Tot}_{TW}(\g^{{\Delta}}))\ar[r]^{\phantom{mm}\Phi}& Z^1(\exp(\g_1))\\
}
\]
where we used the description of the objects in $\holim{\rm Del}(\g^\Delta)$ given in Section \ref{Section.Deligne}, and $\Phi$ is the map described in Proposition \ref{Proposition.MC e Del}. More precisely, since ${\Tot}_{TW}(\g^{{\Delta}})$ is a subDGLA of $\prod_{n\geq 0}\Omega_n\otimes\g_n$, an element $x\in \MC({\Tot}_{TW}(\g^{{\Delta}}))(A)$ will have a component $x_1$ in $\MC(\Omega_1\otimes\g_1)(A)$. The element $x_1$ can be uniquely written as $e^{p(t)}*\xi_1$, with $p(t) \in   (\Omega_1^0\otimes \g^0)\otimes\mathfrak{m}_A$ satisfying $p(0)=0$ and $\xi_1\in \MC(\g_1)(A)$; one has $\Phi(x)=p(1)$. 
\par
Therefore, to prove that  $\pi_{\leq 1}\MC_\infty(\widetilde{\Tot}(\g^{{\Delta}}))\xrightarrow{\sim} \holim \rm{Del} (\g^{\Delta})$ is an isomorphism,  we are reduced to show that the equivalence 
\[
\pi_{\leq 1}\MC_\infty(\widetilde{\Tot}(\g^{{\Delta_{>0}}}))\xrightarrow{\sim} \holim \rm{Del} (\g^{\Delta_{>0}})
\]
is an isomorphism. And this is obvious, since both the groupoid on the left and on the right hand side are discrete (i.e., their only morphisms are identities), and so any equivalence between them is actually an isomorphisms.
\par
To see that $\holim \rm{Del} (\g^{\Delta_{>0}})$ is discrete, just look at the explicit description given in Section \ref{Section.Deligne}. Finally, recall that $\widetilde{\Tot}(\g^{{\Delta_{>0}}})=(\widetilde{\Tot}(\g^{\Delta}))^{>0}$. Therefore a $1$-simplex $\sigma$ in $\MC_\infty(\widetilde{\Tot}(\g^{{\Delta_{>0}}}))(A)$ has the form $\sigma=x(t)$, with $x(t)\in\g_1[t]\otimes\mathfrak{m}_A$ satisfying the Maurer-Cartan equation for the $L_\infty$-algebra $(\widetilde{\Tot}(\g^{\Delta})\otimes\Omega_1)\otimes\mathfrak{m}_A$. Written out explicitly, this equation has the form
\[
\frac{d\, x(t)}{dt}+d_{\Tot}x(t)+\sum_{n\geq 2}\frac{[x(t),x(t),\dots,x(t)]_n}{n!}=0
\]
so, looking at the degree 1 component with respect to the $\Omega_1$-grading, we found 
\[
\frac{d\, x(t)}{dt}=0,
\]
i.e. $\sigma$ is a constant path.
\end{proof}
\begin{remark}
For $\g^\Delta$ a semicosimplicial Lie algebra, let 
\[
\g_1\colon \mathbf{Art}_\K\to \mathbf{Sets}
\]
be the functor of Artin rings defined by $(A,\mathfrak{m}_A)\mapsto \g_1\otimes\mathfrak{m}_A$. Then both $\MC(\widetilde{\Tot}(\g^{{\Delta}}))$ and $Z^1(\exp(\g_1))$ are functors over $\g_1$, and one can show that the natural transformation 
\[
\Phi\circ E_\infty\colon \MC(\widetilde{\Tot}(\g^{{\Delta}}))\to Z^1(\exp(\g_1))
\]
is a natural transformation of functors over $\g_1$ (see the Appendix). In more colloquial terms, for any local Artin algebra $(A,\mathfrak{m}_A)$, the $1$-cocycles with coefficients in $\exp(\g_1\otimes\mathfrak{m}_A)$ and the Maurer-Cartan elements of the $L_\infty$-algebra $\widetilde{\Tot}(\g^{{\Delta}})\otimes\mathfrak{m}_A$ are \emph{the same} subset of $\g_1\otimes\mathfrak{m}_A$.
\end{remark}

\section{Geometric applications}
Let $\mathcal{L}$ be a sheaf of Lie algebras on a topological space $X$. Then, for any open cover $\mathcal{U}$ of $X$ we have the \v{C}ech semicosimplicial Lie algebra $\mathcal{L}({\mathcal U})$
\[
\xymatrix{ {\prod_i\mathcal{L}(U_i)}
\ar@<2pt>[r]\ar@<-2pt>[r] & {
\prod_{i<j}\mathcal{L}(U_{ij})}
      \ar@<4pt>[r] \ar[r] \ar@<-4pt>[r] &
      {\prod_{i<j<k}\mathcal{L}(U_{ijk})}
\ar@<6pt>[r] \ar@<2pt>[r] \ar@<-2pt>[r] \ar@<-6pt>[r]& \cdots}.
\]
If the cover $\mathcal{U}$ is acyclic for $\mathcal{L}$, then $\holim\mathcal{L}(\mathcal{U})\simeq \mathbf{R}\Gamma(X;\mathcal{L})$, so a suitable model for the module of derived global sections of $\mathcal{L}$ has natural structure of DGLA. For instance, we can choose $\Tot_{TW}(\mathcal{L}(\mathcal{U}))$ as such a model. Therefore, summing up results from the previous sections, we obtain that the groupoid of descent data for $\Del(\mathcal{L}(\mathcal{U}))$ is equivalent to the Deligne groupoid
\[
\Del(\mathbf{R}\Gamma(X;\mathcal{L})),
\]
see \cite{hinich-algebras}, 
and isomorphic to the formal groupoid
\[
\pi_{\leq 1}\MC_\infty(\widetilde{\Tot}(\mathcal{L}(\mathcal{U}))).
\]
Going from groupoids to their sets of isomorphisms classes of objects, we obtain an isomorphism of functors $\mathbf{Art}_\K\to \mathbf{Sets}$
\[
\pi_0\Del(\mathcal{L}(\mathcal{U}))\xrightarrow{\sim} \pi_{0}\MC_\infty(\widetilde{\Tot}(\mathcal{L}(\mathcal{U}))).
\]
The functor on the left hand side is $
H^1(X;\exp(\mathcal{L}))$,
whereas the
functor on the right hand side is the classical \emph{deformation functor} associated to the $L_\infty$-algebra $\widetilde{\Tot}(\mathcal{L}(\mathcal{U}))$, see, e.g., \cite{manetti}. Therefore, we obtain at once the following two results:
\begin{itemize}
\bigskip
\item the functor $H^1(X;\exp(\mathcal{L}))$ is a deformation functor, with tangent space $H^1(X;\mathcal{L})$ and obstruction space contained in $H^2(X;\mathcal{L})$;
\bigskip
\item if $\varphi\colon L_1\to L_2$ is a morphism of sheaves of Lie algebras over $X$, then $H^2(\varphi)$ maps the obstruction space of $H^1(X;\exp(\mathcal{L}_1))$ to the obstruction space of $H^1(X;\exp(\mathcal{L}_2))$.
\end{itemize}
\bigskip
As an illustrative example, let $\mathcal{E}$ be a locally free sheaf of $\mathcal{O}_X$-modules on a smooth variety $X$, and let $\mathcal{E}nd(\mathcal{E})$ be its sheaf of endomorphisms. Then $H^1(X;\exp(\mathcal{E}nd(\mathcal{E})))$ is the classical functor of deformations of $\mathcal{E}$, i.e., the deformation functor mapping a local Artin algebra $(A,\mathfrak{m}_A)$ to the set of isomorphism classes
of deformations of $\mathcal{E}$ over $\rm{Spec}(A)$. The trace 
\[
{\rm tr}\colon \mathcal{E}nd(\mathcal{E})\to \mathcal{O}_X
\]
is a morphism of sheaves of Lie algebras. Since $\mathcal{O}_X$ is abelian, so is $\widetilde{\Tot}(\mathcal{O}_X(\mathcal{U}))$, therefore $H^1(X;\mathcal{O}_X^*)$ is unobstructed. And so, the obstructions to deforming $\mathcal{E}$ are contained in 
\[
\ker\{{\rm tr}\colon H^2(X;\mathcal{E}nd(\mathcal{E}))\to H^2(X;\mathcal{O}_X)\}.
\] 

\begin{remark}
If $X$ is a paracompact Hausdorff space, one can compute the derived global sections of $\mathcal{L}$ by considering a fine resolution of $\mathcal{L}$, i.e., a fine sheaf $\mathcal{F}$ of DGLAs concentrated in nonnegative degree over $X$, together with an quasiisomorphism $\mathcal{L}\to\mathcal{F}$. Indeed, in this situation one has natural quasi-isomorphisms of DGLAs
\[
\mathbf{R}\Gamma(X;\mathcal{L})\xrightarrow{\sim} \mathbf{R}\Gamma(X;\mathcal{F}) \xleftarrow{\sim}\Gamma(X;\mathcal{F}).
\]
This way one recovers the infinitesimal version of Newlander-Nirenberg theorem, i.e., that infinitesimal deformations of a smooth complex manifold are controlled by the Kodaira-Spencer DGLA $A^{0,\bullet}(X;\mathcal{T}_X)$ of $(0,q)$-forms with coefficients in the holomorphic tangent sheaf of $X$ (see, e.g., \cite{Goldman-Millson,iacono}). 
Similarly, $A^{0,\bullet}(X;\mathcal{E}nd(\mathcal{E}))$ controls the infinitesimal deformations of 
the locally free sheaves of $\mathcal{O}_X$-modules $\mathcal{E}$, and  $A^{0,\bullet}_X({\mathcal D}^1(\Eps))$ 
controls the infinitesimal deformations of the pair $(X,\mathcal{E})$.
\end{remark}

\section{Appendix: the $L_\infty$ quasi-isomorphism $\widetilde{\Tot}(\g^\Delta)\to \Tot_{TW}(\g^\Delta)$}
In this section we follow \cite{chenggetzler} to endow the total complex of a semicosimplicial DGLA $\g^\Delta$ with a canonical $L_\infty$-algebra structure; a particular case of this construction appears in \cite{Fio-Man}. Notations from Section \ref{Section.TW} will be used throughout this Appendix.
It is a
remarkable fact, first noted by Whitney \cite{whitney}, that the
integration maps
\[ \int_{\Delta^n}\otimes \operatorname{Id}\colon \Omega_n\otimes
\mathfrak{g}_n\to
\mathfrak{g}_n[-n]\]
give a quasi-isomorphism of differential complexes
\[
I\colon (\operatorname{Tot}_{TW}({\mathfrak
g}^\Delta), d_{TW})\to (\operatorname{Tot}({\mathfrak
g}^\Delta),d_{\Tot}).
\]
Moreover, Dupont describes \cite{dupont1,dupont2} (see also
\cite{getzler,navarro}) an explicit morphism of differential complexes
\[
E\colon \operatorname{Tot}({\mathfrak
g}^\Delta)\to \operatorname{Tot}_{TW}({\mathfrak
g}^\Delta)
\]
and an explicit homotopy \[
h\colon \operatorname{Tot}_{TW}({\mathfrak
g}^\Delta)\to \operatorname{Tot}_{TW}({\mathfrak
g}^\Delta)[-1]
\]
such that
\[
IE={\rm Id}_{{\rm Tot}({\mathfrak
g}^\Delta)};\qquad
EI-{\rm Id}_{{\rm Tot}_{TW}({\mathfrak
g}^\Delta)}=[h,d_{TW}].
\]
The morphism of complexes $E:\operatorname{Tot}({\mathfrak
g}^{\Delta})\rightarrow \operatorname{Tot}_{TW}({\mathfrak
g}^{\Delta})$ is defined as follows. If $\gamma \in
\g^j_i$, the element $E(\gamma)=(E(\gamma)_n)\in
C^{i,j}_{TW}(\mathfrak{g}^\Delta)$ is given by:
\[
\left\{ \begin{array}{ll} E(\gamma)_n=0 & \mbox{if \ \ \ } n <i, \\ \\
E(\gamma)_n= i! \displaystyle \sum_{I\in I(i,n)} \omega_I \otimes \partial^{\bar I}\gamma &
\mbox{if\ \ \ } n\geq i, \end{array} \right.
\]
where $I(i,n)$ is the set of all multiindices $I=(a_0,a_1,\ldots,
a_i)\in \mathbb Z^{i+1}$, such that $0\leq a_0 < a_1< \ldots a_i
\leq n$ and, if $I\in I(i,n)$, $\bar I$ is the complementary
multiindex. If $I=(a_0,a_1,\ldots, a_i) \in I(i,n)$, we indicate
with $\omega_I$ the differential form:
\[\omega_I=\sum_{s=0}^i (-1)^s  t_{a_s} dt_{a_0}\wedge
dt_{a_1} \wedge \ldots \wedge \widehat{dt_{a_s}} \wedge \ldots
\wedge dt_{a_i}\in \Omega_n^i.\]
If $\bar I=(b_1,b_2,\ldots,
b_{n-i})$ is the complementary multiindex of $I$, and $\gamma\in
{\mathfrak g}_i^j$, we indicate with $\partial^{\bar I}\gamma$ the
element
\[\partial^{\bar I}\gamma= \partial_{b_{n-i},n} \circ \cdots \circ
\partial_{b_2,i+2}\circ \partial_{b_1,i+1}^{}\gamma\in {\mathfrak
g}^j_n.\] The homotopy $h: \operatorname{Tot}_{TW}({\mathfrak
g}^\Delta) \rightarrow \operatorname{Tot}_{TW}({\mathfrak
g}^\Delta)[-1]$ is then defined as follows: if $x=
(\eta_n\otimes \gamma_n) \in C^{i,j}_{TW}({\mathfrak g}^\Delta)$,
the image $h(x) \in C^{i-1,j}_{TW}({\mathfrak g}^\Delta)$
is given by:
\[ h(x)_n= \sum_{0\leq r < i} \sum_{I\in I(r,n)} r! \
\omega_I \wedge h_I(\eta_n) \otimes \gamma_n,\]
where $I(r,n)$
and $\omega_I$ are as before, and, if $I=(a_0,\ldots, a_r)\in
I(r,n)$, then the map $h_I$ is given by the composition
$h_I=h_{a_r}\circ \cdots \circ h_{a_0}$, where the maps
$h_a=\pi\circ \psi_a^*\colon \Omega_n^*\to \Omega_n^{*-1}$ are the
compositions of the integration over the first factor
\begin{align*}
\pi: \Omega^*([0,1]\times\Delta_n) &\to  \Omega^* (\Delta_n)=\Omega_n^* \\
 \eta(u,t_a,du,dt_a) & \mapsto \int_{u\in[0,1]} \eta(u,t_a,du,dt_a)
\end{align*}
and the pull-back by the dilation maps:
\begin{align*}
\psi_a:[0,1]\times \Delta_n &\to \Delta_n \\
 (u,t_0,\ldots t_n) &\mapsto ((1-u)t_0, \ldots, (1-u)t_a+u, \ldots (1-u)t_n).
\end{align*}
More explicitly, the map $h_a: \Omega_n^*\rightarrow \Omega_n^{*-1}$
 is given by:
\[h_a(\eta(t_0,\ldots,t_n,dt_0,\ldots,dt_n))=
\int_{u\in[0,1]} \eta((1-u)t_0, \ldots, (1-u)t_a+u, \ldots
(1-u)t_n, du, dt_a).\] We refer to the papers
\cite{getzler,navarro} for a proof of the identities $IE={\rm
Id}_{{\rm Tot}({\mathfrak g}^\Delta)}$ and $ EI-{\rm Id}_{{\rm
Tot}_{TW}({\mathfrak g}^\Delta)}=[h,d_{TW}]$. Here we point out
that $E$ and $h$ are defined in terms of integration over standard
simplices and multiplication with canonical differential forms: in
particular the construction of $\operatorname{Tot}_{TW}({\mathfrak
g}^\Delta)$, $\operatorname{Tot}({\mathfrak g}^\Delta)$, $I$, $E$
and $h$ is functorial in the category $[\mathbf{\Delta_{
mon}},\mathbf{DGLAs}]$ of semicosimplicial DGLAs.

\bigskip
 We are now in position to use the \emph{homotopical
transfer of structure} theorem to induce on $\Tot(\g^\Delta)$ an $L_\infty$-structure from the DGLA structure on $\Tot_{TW}(\g^\Delta)$. 
To state and discuss a few aspects of this transfer, it is convenient to use the decalage isomorphism and look at the $n$-ary brackets of an $L_\infty$ algebra as graded symmetric multilinear maps
\[
q_n\colon {\rm Sym}^n(V[1])\to V[2]
\]
With this convention, the homotopy transfer theorem reads as follows (see, e.g., \cite{fuka,HK,Kad,KoSo,KonSoi}).
\begin{theorem*}\label{thm.transfer}
Let $(V,q_1^{},q_2{},q_3^{},\dots)$ be an $L_\infty$-algebra and
$(C,\delta)$ be a differential complex. If there exist two
morphisms of differential complexes
\[
E\colon (C[1],\delta_{[1]}) \to (V[1],q_1) \qquad \text{and}
\qquad I\colon (V[1],q_1)\to (C[1],\delta_{[1]})
\]
such that the composition $EI$ is homotopic to the identity, then
there exist an $L_\infty$-algebra structure
$(C,\hat{q}_1,\hat{q}_2,\dots)$ on $C$ extending its differential
complex structure and  an $L_\infty$-morphism $E_\infty^{}$
extending $E$. In particular, if $E$ is a quasi-isomorphism of
complexes, then $E_\infty$ is a quasi-isomorphism of
$L_\infty$-algebras.
\end{theorem*}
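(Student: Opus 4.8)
The plan is to prove this in the Koszul-dual language of coalgebras, where both an $L_\infty$-structure and an $L_\infty$-morphism become purely linear-algebraic data, and to realise the transfer as an instance of the homological perturbation lemma. First I would translate everything. With the d\'ecalage convention already in use, an $L_\infty$-structure on $V$ is the same datum as a degree $+1$, square-zero coderivation $Q=\sum_{n\ge 1}q_n$ of the reduced symmetric (cofree conilpotent cocommutative) coalgebra $\bar S(V[1])=\bigoplus_{n\ge 1}S^n(V[1])$, whose corestriction to $S^n(V[1])$ is the bracket $q_n\colon S^n(V[1])\to V[2]$ and whose linear part is the differential $\delta_{[1]}$ on $V[1]$. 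Likewise, an $L_\infty$-morphism $C\rightsquigarrow V$ is exactly a morphism of differential graded coalgebras $\bar S(C[1])\to\bar S(V[1])$, and its linear Taylor coefficient is the underlying chain map. Thus the theorem becomes: produce a square-zero coderivation $\hat Q$ on $\bar S(C[1])$ with linear part $\delta_{[1]}$, together with a DG-coalgebra morphism $E_\infty$ whose linear component is $E$. As is usual for transfer, I work with the contraction underlying the hypotheses: $E,I$ and the homotopy $h$ realising $EI\simeq\Id$, satisfying $IE=\Id_{C[1]}$ and $EI-\Id_{V[1]}=[\,\delta_{[1]},h\,]$. By a standard modification of $h$ (the Lambe--Stasheff clean-up) I may assume the side conditions $h^2=0$, $hE=0$, $Ih=0$, so that $(E,I,h)$ is a genuine contraction of $(V[1],q_1)$ onto $(C[1],\delta_{[1]})$.

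Next I would lift this contraction to the cofree coalgebras by the tensor trick: $E$ and $I$ induce coalgebra morphisms $\mathbf E=\bigoplus_n S^n(E)$ and $\mathbf I=\bigoplus_n S^n(I)$ between $\bar S(C[1])$ and $\bar S(V[1])$, while the symmetrisation of $\sum_{a+b=n-1}(EI)^{\otimes a}\otimes h\otimes \Id^{\otimes b}$ on each tensor power defines a homotopy $\mathbf h$ on $\bar S(V[1])$. The side conditions on $h$ guarantee that $(\mathbf E,\mathbf I,\mathbf h)$ is again a contraction and, crucially, a contraction \emph{of coalgebras} (the maps are coalgebra morphisms and $\mathbf h$ satisfies the coalgebra side conditions). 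Its differential on the big side is the coderivation $\mathbf q_1$ induced by $q_1$.

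Then comes the heart of the argument. I would regard the full codifferential $Q$ as a perturbation $t=Q-\mathbf q_1=\sum_{n\ge 2}q_n$ of $\mathbf q_1$. Since each $q_n$ with $n\ge 2$ strictly lowers the word-length filtration of $\bar S(V[1])$, the composite $t\mathbf h$ is locally nilpotent on the conilpotent coalgebra $\bar S(V[1])$, so the geometric series of the perturbation lemma converge termwise. Applying the coalgebra version of the homological perturbation lemma to this contraction and this perturbation yields a perturbed contraction whose big-side differential is $Q$; the transported small-side differential $\hat Q=\mathbf q_1+\mathbf I\,\bigl(\sum_{k\ge 0}(t\mathbf h)^k\bigr)t\,\mathbf E$ (schematically) is automatically a square-zero coderivation, hence an $L_\infty$-structure $(\hat q_1,\hat q_2,\dots)$ on $C$, and unwinding its lowest-length component gives $\hat q_1=\delta_{[1]}$, so it extends the complex structure. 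Likewise the perturbed comparison map $E_\infty=\mathbf E+\mathbf h\,t\,\mathbf E+\cdots$ is a DG-coalgebra morphism, i.e.\ an $L_\infty$-morphism, whose length-one component is unchanged and equal to $E$. Equivalently, unravelling these series reproduces the familiar sum-over-rooted-trees formulas, with $q_k$ at the internal vertices, $h$ on the internal edges, $E$ at the leaves, and $\mathbf I$ (resp.\ $\mathbf h$) at the root for $\hat Q$ (resp.\ $E_\infty$).

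Finally, the \lq\lq in particular\rq\rq\ clause is immediate: the linear part of $E_\infty$ is $E$, and an $L_\infty$-morphism whose linear component is a quasi-isomorphism of complexes is by definition an $L_\infty$-quasi-isomorphism. The main obstacle I anticipate is not the formal application of the perturbation lemma but the two verifications that surround it: arranging the side conditions so that the lifted data is a contraction of \emph{coalgebras} (this is what forces $\hat Q$ to be a coderivation and $E_\infty$ a coalgebra morphism, rather than merely chain-level maps), and checking the local nilpotence that makes the transferred series well defined. Both are standard but must be stated with care, since they are precisely what upgrades a chain-level homotopy equivalence to a transfer of the full $L_\infty$-structure.
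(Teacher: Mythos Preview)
The paper does not actually prove this theorem: it is stated as a known result, with references to the literature (Fukaya, Huebschmann--Kadeishvili, Kadeishvili, Kontsevich--Soibelman), and is immediately followed by the Kontsevich--Soibelman explicit formulas for $\hat q_n$ and $E_\infty$ as sums over decorated rooted trees, again stated without proof. So there is no ``paper's own proof'' to compare against; the paper only records the tree description of the transferred structure, which it then uses in the proof of the Corollary.

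Your proposal is the standard and correct route: reformulate $L_\infty$-data as square-zero coderivations on $\bar S(V[1])$, lift the contraction to the symmetric coalgebras via the tensor trick, and apply the (coalgebra version of the) homological perturbation lemma to the perturbation $t=Q-\mathbf q_1$. You also correctly observe that unwinding the resulting geometric series reproduces exactly the rooted-tree formulas the paper records. In that sense your argument \emph{supplies} what the paper only quotes, and lands on the same explicit description the paper uses downstream.

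One small point to tighten: the theorem, as stated, assumes only that $EI$ is homotopic to the identity; it does not literally assume $IE=\Id_{C[1]}$. You silently add this when you speak of ``the contraction underlying the hypotheses''. In the paper's intended application (Whitney integration and Dupont's $E$ and $h$) one does have $IE=\Id$, so no harm is done there, and indeed the standard HPL proof needs genuine SDR data. But if you want your write-up to match the stated hypotheses verbatim, you should either note that you are assuming the stronger contraction data (as is standard, and as holds in the paper's application), or indicate how to reduce to that case.
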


It has been remarked by Kontsevich and Soibelman in \cite{KonSoi}
(see also \cite{fuka}) that the $L_\infty$-morphism $E_\infty$ and
the brackets $\hat{q}_n$ can be explicitly written as summations
over rooted trees. Let $h\in\Hom^{-1}(V[1],V[1])$ be an homotopy
between $EI$ and $\operatorname{Id}_{V[1]}$, i.e.,
$q_1h+hq_1=EI-\Id_{V[1]}$, and denote by ${\mathcal T}_{h,n}$ be
the groupoid whose objects are directed rooted trees with internal
vertices of valence at least two and exactly $n$ tail edges; trees
in ${\mathcal T}_{h,n}$ are  decorated as follows: each tail edge
of a tree in ${\mathcal T}_{h,n}$ is decorated by the operator
$E$, each internal edge is decorated by the operator $h$ and also
the root edge is decorated by the operator $h$; every internal
vertex $v$ carries the  operation $q_r$, where $r$ is the number
of edges having $v$ as endpoint. Isomorphisms between objects in
${\mathcal T}_{h,n}$ are isomorphisms of the underlying trees.
Denote the set of isomorphism classes of objects of ${\mathcal
T}_{h,n}$ by the symbol $T_{h,n}$. Similarly, let ${\mathcal
T}_{I,n}$ be the groupoid whose objects are directed rooted trees
with the same decoration as ${\mathcal T}_{h,n}$ except for the
root edge, which is decorated by the operator $I$ instead of $h$.
The set of isomorphism classes of objects of ${\mathcal T}_{I,n}$
is denoted $T_{I,n}$. Via the usual operadic rules, each decorated
tree $\Gamma\in {\mathcal T}_{h,n}$ gives a linear map
\[ Z_\Gamma(E,I,h,q_i)\colon C[1]^{\odot n}\to V[1],\]
similarly, each decorated tree $\Gamma \in {\mathcal T}_{I,n}$
gives rise to a degree one multilinear operator on $C[1]$ with
values in $C[1]$.\par

Having introduced these notations, we can write
Kontsevich-Soibelman's formulas as follows.
\begin{proposition*}\label{prop.sumovertrees}
In the  above set-up the brackets $\hat{q}_n$, and the
$L_\infty$ morphism
$E_\infty$ can be expressed as sums over
decorated rooted trees via the formulas
\[
E_n=\sum_{\Gamma\in
T_{h,n}}\frac{Z_\Gamma(E,I,h,q_i)}{|\operatorname{Aut}\Gamma|};
\qquad
\hat{q}_n=\sum_{\Gamma\in
T_{I,n}}\frac{Z_\Gamma(E,I,h,q_i)}{|\operatorname{Aut}\Gamma|};\qquad
n\geq 2.
\]
\end{proposition*}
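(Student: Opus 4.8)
The plan is to recognise the stated identities as the tree-level incarnation of the homotopy transfer theorem and to derive them from the homological perturbation lemma (HPL) in the cofree-coalgebra formulation of $L_\infty$-structures. Recall that an $L_\infty$-structure on $V$ is the same datum as a degree-one, square-zero coderivation $Q$ of the cofree conilpotent cocommutative coalgebra $S^c(V[1])=\bigoplus_{n\geq 1}\mathrm{Sym}^n(V[1])$, whose corestriction to $V[1]$ has $n$-th Taylor coefficient $q_n$; an $L_\infty$-morphism is a counital coalgebra morphism intertwining the codifferentials, determined by its corestriction $E_\infty=(E_n)_{n\geq 1}$. Writing $Q=Q_1+Q_{>1}$, with $Q_1$ the coderivation induced by the linear part $q_1$, the higher brackets assemble into $Q_{>1}$, and the whole point is that transfer along $(E,I,h)$ is governed by treating $Q_{>1}$ as a perturbation of $Q_1$.

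First I would lift the contraction $(E,I,h)$ of complexes to a contraction of the symmetric coalgebras. The chain maps $E$ and $I$ induce coalgebra morphisms $E^c=\bigoplus_n\mathrm{Sym}^n(E)$ and $I^c=\bigoplus_n\mathrm{Sym}^n(I)$ commuting with the codifferentials $Q_1^c$ and $\delta^c$ induced by $q_1$ and $\delta$, while the homotopy $h$ extends, via the standard tensor-trick formula symmetrised over each $\mathrm{Sym}^n$, to a coalgebra homotopy $h^c$ satisfying $I^c E^c=\mathrm{Id}$ and $E^c I^c-\mathrm{Id}=[h^c,Q_1^c]$. Since $S^c(V[1])$ is conilpotent and $Q_{>1}$ strictly decreases symmetric-tensor word length, the operator $h^cQ_{>1}$ is locally nilpotent; thus $Q_{>1}$ is a \emph{small} perturbation, the HPL applies, and it produces the transferred codifferential
\[
\hat{Q}=Q_1^c+I^c\,Q_{>1}\,(\mathrm{Id}-h^c Q_{>1})^{-1}E^c
\]
on $S^c(C[1])$ together with the perturbed coalgebra morphism $(\mathrm{Id}-h^c Q_{>1})^{-1}E^c$. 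Their corestrictions to $C[1]$, respectively $V[1]$, are by construction the transferred brackets $\hat q_n$ and the morphism components $E_n$ whose existence is asserted by the homotopy transfer theorem; the $k=0$ term reproduces $\hat q_1=\delta$ and $E_1=E$, consistently with restricting the tree sums to $n\geq 2$.

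Next I would expand the geometric series $(\mathrm{Id}-h^c Q_{>1})^{-1}=\sum_{k\geq 0}(h^c Q_{>1})^k$ and read off the $n$-ary corestriction. Once evaluated on a symmetric word of $n$ inputs and expanded through the comultiplications implicit in $E^c$, $h^c$ and the coderivation $Q_{>1}$, each monomial $I^cQ_{>1}(h^cQ_{>1})^kE^c$ (for $\hat q_n$), and the analogous monomial $(h^cQ_{>1})^{k+1}E^c$ (for $E_n$), is precisely the composite that the operadic rule associates to a directed rooted tree: the $n$ tail edges carry $E$, each internal vertex of valence $r$ carries $q_r$ (so internal vertices have valence at least two, matching $\mathcal{T}_{h,n}$ and $\mathcal{T}_{I,n}$), each internal edge carries $h$, and the root edge carries $I$ for $\hat q_n$ or $h$ for $E_n$. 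Conilpotency bounds the number of internal vertices by $n-1$, so only finitely many trees contribute for each $n$, and one obtains exactly the finite sums $Z_\Gamma$ of the statement.

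The main obstacle is combinatorial and sign-theoretic: one must verify that, after summing the monomial contributions and passing to isomorphism classes of trees, each class $\Gamma$ is weighted by exactly $1/|\mathrm{Aut}\,\Gamma|$, and that the Koszul signs produced by the d\'ecalage and by the symmetric-coalgebra comultiplications agree with the operadic signs built into $Z_\Gamma$. The automorphism factors are forced by cocommutativity: the series runs naturally over ordered decorated trees, and the symmetrisation inherent in each $\mathrm{Sym}^n$ collapses every isomorphism class of abstract trees to a single term whose multiplicity in the orbit count is $|\mathrm{Aut}\,\Gamma|$, producing the reciprocal normalisation by the orbit–stabiliser relation. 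I would organise this verification as an induction on $n$, using that every tree in $\mathcal{T}_{I,n}$ (resp. $\mathcal{T}_{h,n}$) decomposes uniquely at its root vertex into $r$ subtrees lying in the lower groupoids $\mathcal{T}_{h,n_1},\dots,\mathcal{T}_{h,n_r}$ with $n_1+\cdots+n_r=n$; this decomposition matches the recursion encoded by a single application of $Q_{>1}$ in the series above and reduces the sign check to the already-settled case of one bracket. For the detailed sign bookkeeping I would follow the treatment in \cite{KonSoi,fuka}.
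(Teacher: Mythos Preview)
The paper does not give its own proof of this proposition: it is recorded as a known observation of Kontsevich and Soibelman, with references to \cite{KonSoi} and \cite{fuka}, immediately after the sentence ``Having introduced these notations, we can write Kontsevich-Soibelman's formulas as follows.'' So there is nothing to compare against on the paper's side.

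Your derivation via the homological perturbation lemma on the cofree conilpotent cocommutative coalgebra is the standard modern route to these formulas and is correct in outline. One technical point deserves more care: the symmetrised tensor-trick extension $h^c$ is a homotopy of complexes but is \emph{not} in general compatible with the coalgebra comultiplication, so the raw output of the HPL is not automatically a square-zero coderivation together with a coalgebra morphism. To close this gap one either imposes the side conditions $h^2=0$, $hE=0$, $Ih=0$ on the original contraction (which can always be arranged, and which the Dupont homotopy in the paper's specific application does satisfy after normalisation), or one verifies directly that the tree sums obey the $L_\infty$ relations and the $L_\infty$-morphism relations---which is precisely the computation you already defer to \cite{KonSoi,fuka}. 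With that caveat made explicit, your argument is sound.
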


\begin{corollary*} 
Let $\g^\Delta$ be a semicosimplicial Lie algebra. Then there is a natural isomorphism 
\[
\MC(\widetilde{\Tot}(\g^{{\Delta}}))\xrightarrow{\sim}Z^1(\exp(\g^\Delta))
\]
of functors $[\mathbf{Art}_\K,\mathbf{Sets}]$ over $\mathfrak{g}_1$.
\end{corollary*}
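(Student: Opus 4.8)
The plan is to deduce the Corollary directly from Theorem \ref{Th.Iso gruppoidi} by unwinding what that theorem says on objects. Theorem \ref{Th.Iso gruppoidi} produces a natural isomorphism of formal groupoids $\pi_{\leq 1}\MC_\infty(\widetilde{\Tot}(\g^\Delta))\cong \holim\Del(\g^\Delta)$; passing to sets of objects gives a natural isomorphism of set-valued functors $\MC(\widetilde{\Tot}(\g^\Delta))\xrightarrow{\sim}\mathrm{Ob}\,\holim\Del(\g^\Delta)(A)$. So the first step is to recall, from Section \ref{Section.Deligne}, that when $\g^\Delta$ is a semicosimplicial Lie algebra (all $\g_i$ in degree $0$), the set of objects of $\holim\Del(\g^\Delta)(A)$ is exactly the set of nonabelian $1$-cocycles
\[
Z^1(\exp(\g_1))(A)=\{m\in\g_1\otimes\mathfrak m_A\mid e^{\partial_{0,2}(m)}e^{-\partial_{1,2}(m)}e^{\partial_{2,2}(m)}=1\},
\]
since the first defining equation $dl+\tfrac12[l,l]=0$ and the second $e^m*\partial_{0,1}l=\partial_{1,1}l$ collapse (the DGLAs being concentrated in degree $0$, $\g_0^1=0$, so $l=0$ and $\partial_{i,1}l=0$). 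This identifies the target of the isomorphism with $Z^1(\exp(\g^\Delta))(A)$, giving the statement as an isomorphism of functors $[\mathbf{Art}_\K,\mathbf{Sets}]$.

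The second step is to identify the isomorphism itself as $\Phi\circ E_\infty$. This is already spelled out in the proof of Theorem \ref{Th.Iso gruppoidi}: the isomorphism on objects is the composite
\[
\MC(\widetilde{\Tot}(\g^\Delta))\xrightarrow{E_\infty}\MC(\Tot_{TW}(\g^\Delta))\xrightarrow{\Phi}Z^1(\exp(\g_1)),
\]
where $E_\infty$ is the $L_\infty$-morphism from the homotopy transfer and $\Phi$ is Hinich's map from Proposition \ref{Proposition.MC e Del}, concretely: the component $x_1\in\MC(\Omega_1\otimes\g_1)(A)$ of a Maurer-Cartan element $x$ is written uniquely as $e^{p(t)}*\xi_1$ with $p(0)=0$, and $\Phi(x)=p(1)$. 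So no new construction is needed here; one just records that this composite is the natural isomorphism whose existence Theorem \ref{Th.Iso gruppoidi} asserts on the level of objects.

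The third and only substantive step is the assertion ``over $\g_1$''. Here $\g_1$ denotes the functor $(A,\mathfrak m_A)\mapsto\g_1\otimes\mathfrak m_A$, and the claim is that both $\MC(\widetilde{\Tot}(\g^\Delta))$ and $Z^1(\exp(\g^\Delta))$ come with natural transformations to $\g_1$ and that $\Phi\circ E_\infty$ commutes with them. For $Z^1(\exp(\g^\Delta))$ the structure map to $\g_1$ is the tautological inclusion. For $\MC(\widetilde{\Tot}(\g^\Delta))$ one uses that $\widetilde{\Tot}(\g^\Delta)=\bigoplus_i\g_i[-i]$ as a complex, so its degree-$1$ part has $\g_1$ (sitting in $\g_1[-1]$) as a direct summand, and projecting a Maurer-Cartan element onto this summand gives the structure map. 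The content to verify is then: if $x\in\MC(\widetilde{\Tot}(\g^\Delta))(A)$ has $\g_1$-component $m$, then $\Phi(E_\infty(x))=m$ as well. This is where the explicit tree formulas of the Appendix enter. I would argue that $E_\infty$ is a \emph{linear} $L_\infty$-morphism whose linear term is $E$, and that $E(\gamma)$ for $\gamma\in\g_1$ has $\Omega_1$-component (i.e.\ $n=1$ component) equal to $1!\,\omega_{(0,1)}\otimes\gamma=(t_1\,dt_0-t_0\,dt_1)\otimes\gamma=-dt_0\otimes\gamma$ on $\Delta^1$ (using $t_0+t_1=1$), whose Dupont primitive integrates to $\gamma$ at $t=1$; meanwhile the higher tree summands of $E_\infty$ have internal edges decorated by the homotopy $h$, which lowers $\Omega$-degree, so on the $n=1$ slot they either vanish or contribute only in lower $\g$-degree, hence do not affect the $\g_1$-component of $p(1)$. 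Carrying this out carefully — checking that the quadratic-and-higher corrections $E_n$ do not alter the linear-in-$\Omega_1$-degree-$1$ part that $\Phi$ reads off — is the main obstacle, and is precisely the computation relegated to the Appendix; the rest is bookkeeping around Theorem \ref{Th.Iso gruppoidi}.
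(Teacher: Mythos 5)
Your proposal is correct and follows essentially the same route as the paper: reduce to showing that $\Phi\circ E_\infty$ is the identity over $\g_1$, compute that the $(\Omega_1\otimes\g_1)$-component of $E_1(x)$ is $-x\,dt$ (so $\Phi(E_1(x))=x$), and kill the higher Taylor coefficients $E_n$, $n\ge 2$, by the observation that $q_2(E\otimes E)$ lands in $\Omega$-form-degree $2$, hence vanishes on the $1$-simplex, and $h$ cannot restore a nonzero $(\Omega_1\otimes\g_1)$-component. The only blemishes are your passing claim that $E_\infty$ is a \emph{linear} $L_\infty$-morphism (it is not --- the nontrivial $E_n$ are exactly what your subsequent, and correct, tree argument is there to handle) and a sign slip in $\omega_{(0,1)}$, neither of which affects the argument.
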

\begin{proof}
Recall from the proof of Theorem \ref{Th.Iso gruppoidi} that we have a natural isomorphism of functors
$
\Phi\circ E_\infty\colon \MC(\widetilde{\Tot}(\g^{{\Delta}}))\xrightarrow{\sim}Z^1(\exp(\g^\Delta))
$,
 so we just have to prove that the diagram
\[
\xymatrix{
\MC(\widetilde{\Tot}(\g^{{\Delta}}))\ar[rr]^{\Phi\circ E_\infty}\ar[rd]_{\rm incl.}&&Z^1(\exp(\g^\Delta))
\ar[ld]^{\rm incl.}\\
&\g_1&
}
\]
commutes, i.e.,
that the composition $\Phi \circ E_{\infty}$
is ``the identity'' on elements in
$\MC(\widetilde{\Tot}(\g^\Delta))$. Let $x$ be such an element; by definition,
$\Phi$ reads only the $(\Omega_1\otimes \g_1)$-component of
$E_{\infty}(x)$. We have $(E_1(x))_1=E(x)_1= (t_0 \, dt_1-t_1\, dt_0)x$,
which, under the isomorphism $\Omega_1\simeq \C[t,dt]$ reads
$(E_1(x))_1=-x \, dt$. If $n\geq2$, the formulas for $E_n$ involve the subgraph
\[
\begin{xy}
,(-8,6)*{\circ};(0,0)*{\bullet}**\dir{-}?>*\dir{>}
,(-8,-6)*{\circ};(0,0)*{\bullet}**\dir{-}?>*\dir{>}
,(0,0)*{\bullet};(8,0)*{\bullet}**\dir{-}?>*\dir{>}
\end{xy}
\qquad
\rightsquigarrow
\qquad
\begin{xy}
,(-10,6.66);(-6,4)*{\,\scriptstyle{E}\,}**\dir{-}
,(-10,-6.66);(-6,-4)*{\,\scriptstyle{E}\,}**\dir{-}
,(-6,4)*{\,\scriptstyle{E}\,};
(0,0)*{\,\,\scriptstyle{q_2}\,}**\dir{-}?>*\dir{>}
,(-6,-4)*{\,\scriptstyle{E}\,};(0,0)*{\,\,\scriptstyle{q_2}\,}**\dir{-}?>*\dir{>}
,(0,0)*{\,\,\scriptstyle{q_2}\,};
(8,0)*{\,\scriptstyle{h}\,}**\dir{-}
,(8,0)*{\,\scriptstyle{h}\,};
(16,0)**\dir{-}?>*\dir{>}
\end{xy},
\]
where a white dot denotes an external vertex (a leaf) and a black dot an internal one, and so they involve the operation $q_2(E \otimes E)$ on ${\rm Sym}^2(\g_1)$.
Since we have:
\[
E: \g_1 \to \prod_{i\geq 1}\Omega^1_i\otimes \g_i,
\qquad\qquad q_2: {\rm Sym}^2(\prod_{i\geq 1}\Omega^1_i\otimes \g_i) \to \prod_{i\geq 2}\Omega^2_i\otimes \g_i,\]
and
\[
h\colon \prod_{i\geq 2}\Omega^2_i\otimes \g_i\to \prod_{i\geq 2}\Omega^1_i\otimes \g_i,
\]
the element
$E_n(x,x,\dots,x)$ has zero $(\Omega_1\otimes \g_1)$-component, for
$n\geq2$. Then $\Phi E_{\infty}(x)= \Phi E_1(x)= \Phi (-x dt)=
\Phi(e^{t x}*0)= x$.
\end{proof}

\end{document}